\theoremstyle{plain}
\newtheorem{theorem}{Theorem}[section]
\newtheorem{prop}[theorem]{Proposition}
\newtheorem{lemma}{Lemma}[section]
\theoremstyle{definition}
\newtheorem{definition}{Definition}[section]
\newtheorem{remark}{Remark}[section]
\newtheorem{example}{Example}[theorem]
\begin{document}
\title[ A complete characterization of Birkhoff-James orthogonality]{ A complete characterization of Birkhoff-James orthogonality in infinite dimensional normed space}
\author[Debmalya Sain, Kallol Paul and Arpita Mal]{Debmalya Sain, Kallol Paul and Arpita Mal}

\newcommand{\acr}{\newline\indent}

\address[Sain]{Department of Mathematics\\ Indian Institute of Science\\ Bengaluru 560012\\ Karnataka \\India\\ }
\email{saindebmalya@gmail.com}

\address[Paul]{Department of Mathematics\\ Jadavpur University\\ Kolkata 700032\\ West Bengal\\ INDIA}
\email{kalloldada@gmail.com}

\address[Mal]{Department of Mathematics\\ Jadavpur University\\ Kolkata 700032\\ West Bengal\\ INDIA}
\email{arpitamalju@gmail.com}

\thanks{Research of the first author is sponsored by Dr. D. S. Kothari Postdoctoral Fellowship, under the mentorship of Professor Gadadhar Misra. First author would like to lovingly acknowledge the blissful presence of his friends from Ramakrishna Mission Vidyapith Purulia, in general, and Mr.Arijeet Roy Chowdhury, in particular, in every sphere of his life!
The third author would like to thank UGC, Govt. of India for the financial support.} 

\subjclass[2010]{Primary 46B20, Secondary 47L05}
\keywords{Orthogonality; linear operators; norm attainment, smoothness}

\begin{abstract}
In this paper, we study Birkhoff-James orthogonality of bounded linear operators and give a complete characterization of Birkhoff-James orthogonality of bounded linear operators on infinite dimensional real normed linear spaces.  As  an application of the results obtained, we prove a simple but useful characterization of Birkhoff-James orthogonality of bounded linear functionals defined on a real normed linear space, provided the dual space is strictly convex. We also provide  separate necessary and sufficient conditions for smoothness of bounded linear operators on infinite dimensional normed linear spaces.  
\end{abstract}

\maketitle
\section{Introduction.} 
Birkhoff-James orthogonality \cite{B, Ja} of elements in a normed linear space was introduced by Birkhoff in \cite{B}, in order to generalize the concept of orthogonality in inner product spaces. Over the years, Birkhoff-James orthogonality has been undoubtedly established as an important concept in the study of geometry of normed linear spaces by virtue of its rich connection with several geometric properties of the space, like strict convexity, smoothness etc. \cite{J, Ja, Jb}. Recently, a renewed interest has been generated towards studying the Birkhoff-James orthogonality of elements in the space of bounded linear operators between normed linear spaces \cite{BS, PSG, SP}. While complete characterization of Birkhoff-James orthogonality of bounded linear operators defined on a Hilbert space \cite{BS, P}, or a finite dimensional real Banach space \cite{S} has been obtained, the problem of characterizing Birkhoff-James orthogonality of bounded linear operators on infinite dimensional normed linear spaces remains unsolved. Our present paper settles the issue in that direction. Also a complete characterization of the smoothness of bounded linear operators on infinite dimensional normed linear spaces remains elusive for long despite having been studied by many mathematicians like \cite{A,DK,HR,HF,HO,KY,R} over the years. Recently in \cite{PSG}, a sufficient condition for the smoothness of a bounded linear operator has been obtained using Birkhoff-James orthogonality techniques. We carry on this work and prove separate  necessary and sufficient conditions for smoothness of bounded linear operators on infinite dimensional normed linear spaces for the first time.  Without further ado, let us establish our notations and terminologies.\\

In this paper, letters $ \mathbb{X}, \mathbb{Y} $ denote normed linear spaces. Throughout the paper, we will be working with real normed linear spaces. Let $B_{\mathbb{X}} = \{x \in \mathbb{X} \colon \|x\| \leq 1\}$ and
$S_{\mathbb{X}} = \{x \in \mathbb{X} \colon \|x\|=1\}$ be the unit ball
and the unit sphere of $\mathbb{X}$ respectively. Let $\mathbb{B}(\mathbb{X}, \mathbb{Y}) (\mathbb{K}(\mathbb{X}, \mathbb{Y})) $ denote the space of all bounded (compact) linear operators from $ \mathbb{X} $ to $ \mathbb{Y}. $ For $x,y \in \mathbb{X}$, $x$ is said to be
\emph{orthogonal to $y$ in the sense of Birkhoff-James} \cite{B}, written as $x \perp_B y$, if $\|x\| \leq \|x+\lambda y\|$ for all $\lambda \in \mathbb{R}$. Likewise for $T,A \in \mathbb{B}(\mathbb{X}, \mathbb{Y})$, $T$ is said to be orthogonal to $A$ in the sense of Birkhoff-James, written as $T \perp_B A$, if $\|T\| \leq \|T + \lambda A\|$ for all $\lambda \in  \mathbb{R}$. It is easy to observe that in inner product spaces, $x \perp_B y$ is equivalent to the usual inner product orthogonality $x \perp y$.

\smallskip

In a  Hilbert space $\mathbb{H}$, Bhatia and \v{S}emrl \cite{BS}  and Paul \cite{P} independently proved that $T \perp_B A$ if and only if there exists $\{x_n\} \subset S_{\mathbb{H}}$ such that $\|Tx_n\| \longrightarrow \|T\|$ and $\langle Tx_n, Ax_n \rangle \longrightarrow 0$.  If the space is finite dimensional it then follows that $T \bot_B A$ if and only if there exists $ x \in S_\mathbb{H}$ such that $ \|Tx\| = \|T\| $ and $  \langle Tx, Ax \rangle = 0.$  Recently in \cite{S}, Sain characterized the Birkhoff-James orthogonality of linear operators on a finite dimensional real Banach space . The following two definitions were necessary to obtain the desired characterization in \cite{S}: \\

\begin{definition}
Let $T \in \mathbb{B}(\mathbb{X}, \mathbb{Y})$.  We define $M_T$ to be the set of all unit vectors in $S_\mathbb{X}$
at which $T$ attains norm, i.e.,
\[
M_T = \{ x \in S_\mathbb{X} \colon \|Tx\| = \|T\| \}.
\]
\end{definition}
\begin{definition}
For any two elements $ x, y $ in a real normed linear space $ \mathbb{X}, $ let us say that $ y \in x^{+} $ if $ \| x + \lambda y \| \geq \| x \| $ for all $ \lambda \geq 0. $ Accordingly, we say that $ y \in x^{-} $ if $ \| x + \lambda y \| \geq \| x \| $ for all $ \lambda \leq 0$.
\end{definition}

In this paper, our aim is to extend the works of \cite{S} to the setting of infinite dimensional normed linear spaces. We extend Theorem $ 2.2 $ of \cite{S} completely, in case of compact linear operators defined on a reflexive Banach space. However,  the scenario is far more complicated in case of general bounded linear operators defined on a normed linear space. At this point of our discussion, the following definitions are in order. 

\begin{definition}
\cite{G} Let $ \mathbb{X} $ be a normed linear space and let $ x \in S_{\mathbb{X}}. $ We say that $ x $ is a rotund point of $ B_{\mathbb{X}} $ if $ \| y \| = \| \frac{x+y}{2} \| = 1 $ implies that $ x = y. $
\end{definition}

\begin{definition} A normed linear space $ \mathbb{X} $ is said to be strictly convex if for each $x,y \in S_{\mathbb{X}},$  $\|x+y\| < \|x\| + \|y\|$ whenever $x,y$ are linearly independent.
\end{definition}

\begin{definition} A normed linear space $ \mathbb{X} $ is said to be uniformly convex if to each $\epsilon, 0 < \epsilon \leq 2, $ there corresponds a $\delta(\epsilon) > 0$ such that the conditions $\|x\| = \|y\| = 1$ and $\|x - y \| > \epsilon $ imply $ \frac{\|x+y\|}{2} \leq  1-\delta(\epsilon).$ 
\end{definition}

\begin{definition}
\cite{PSJ} Let $ \mathbb{X} $ be a normed linear space and let $ x, y \in \mathbb{X}. $ We say that $ x $ is strongly orthogonal to $ y $ in the sense of Birkhoff-James, written as $ x \perp_{SB} y, $ if $ \| x + \lambda y \| > \| x \| $ for all $ \lambda \neq 0. $
\end{definition}

Following the  idea involved in the proof of Theorem 2.4 of \cite{SPJ}, we observe that $ x \in S_{\mathbb{X}} $ is a rotund point of $ B_{\mathbb{X}} $ if and only if for any $ y \in \mathbb{X} \setminus \{\theta\}, $ $ x \perp_{B} y $ implies that $ x \perp_{SB} y. $ Equipped with this characterization of rotund points, we proceed towards obtaining a complete characterization of Birkhoff-James orthogonality of rotund points in the space of bounded linear operators. In order to obtain the desired characterization for rotund points and for general bounded linear operators, we need to introduce a new definition which is essentially geometric in nature. First, let us give a brief motivation in this regard. The notion of approximate orthogonality ($ \epsilon $-orthogonality) was first considered by Chmieli\'nski in \cite{C}: \\
Let $\mathbb{H}$ be an inner product space and let $ x, y \in \mathbb{H}. $ For $ \epsilon \in [0, 1), $ we say that $ x $ is $ \epsilon $-orthogonal to $y$, written as $ x \perp^{\epsilon} y, $ if $ |<x, y>| \leq \epsilon \| x \| \| y \|. $ The definition was suitably modified in \cite{Ca}, to obtain an analogous definition of $ \epsilon $-orthogonality in normed linear spaces: 

\begin{definition}
\cite{Ca} Let $ \mathbb{X} $ be a normed linear space and let $ x, y \in \mathbb{X}. $ For $ \epsilon \in [0, 1)$, we say that $ x $ is $ \epsilon $-orthogonal to $ y $ (in the sense of Birkhoff-James), written as $ x \perp_{D}^{\epsilon} y, $ if $ \| x+\lambda y \|\geq \sqrt[]{1-\epsilon^2}~\|x\|$ for all $\lambda \in \mathbb{R}. $
\end{definition}

In \cite{S} we ``decomposed" Birkhoff-James orthogonality (via Definition $ 1.2 $ stated in this paper) in order to obtain a complete characterization of Birkhoff-James orthogonality of bounded linear operators on finite dimensional Banach space. Following similar motivations, in this paper we decompose $ \epsilon $-orthogonality in order to completely characterize Birkhoff-James orthogonality of bounded linear operators, by means of the following definition.

\begin{definition}
Let $ \mathbb{X} $ be a normed linear space and let $ x, y \in \mathbb{X}. $ For $ \epsilon \in [0, 1), $ we say that $ y \in x^{+(\epsilon)} $if $ \| x+\lambda y \|\geq \sqrt[]{1-\epsilon^2}~\|x\|$ for all $\lambda \geq 0. $ Similarly, we say that $ y \in x^{-(\epsilon)} $if $ \| x+\lambda y \|\geq \sqrt[]{1-\epsilon^2}~\|x\|$ for all $\lambda \leq 0. $ 
\end{definition}

Motivated by the result on rotund bounded linear operators, we finally obtain a  complete characterization of Birkhoff- James orthogonality of bounded linear operators on general normed linear spaces. As an application of the results obtained by us, we completely characterize Birkhoff-James orthogonality of bounded linear functionals on a normed linear space whose dual is strictly convex. 

\medskip

The study of the geometry of the space of bounded linear operators on a general normed linear space $\mathbb{X}$ is far more complicated than that of the ground space $\mathbb{X}.$ The relation of Birkhoff-James orthogonality of bounded linear operators   with that of some special elements of $\mathbb{X}$ has been used in \cite{PSG} to obtain a sufficient condition for smoothness of a bounded linear operator. Carrying on in this direction we here obtain separate necessary and sufficient conditions for smoothness of a bounded linear operator on a general normed linear space. An element $ x \in S_{\mathbb{X}} $ is said to be a smooth point if there exists a unique supporting hyperplane to $B_{\mathbb{X}}$ at $x.$ The characterization of smooth points obtained by James \cite{J} 
has been used in our study, which states that $ x \in S_{\mathbb{X}} $ is a smooth point if and only if $ x \bot_B y $ and $ x \bot_B z$ implies $ x \bot_B {(y + z)}.$

\section{Main results.}
Let us begin by giving a complete characterization of Birkhoff-James orthogonality of compact linear operators defined on a reflexive Banach space. This extends Theorem $ 2.2 $ of \cite{S}.
\begin{theorem}\label{theorem:compact}
Let $\mathbb{X}$ be a reflexive Banach space and  $\mathbb{Y}$ be any normed linear space. Then for any $T, A\in \mathbb{K}(\mathbb{X}, \mathbb{Y})$, $T\bot_B A$ if and only if there exists $x, y\in M_T$ such that $Ax\in (Tx)^+$ and $Ay\in (Ty)^-$.
\end{theorem}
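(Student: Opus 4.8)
The plan is to prove both implications separately, using reflexivity and compactness to guarantee norm attainment and to control limits of norming sequences.

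\medskip

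\noindent\textbf{The ``if'' direction.} This is the easy half and does not require compactness or reflexivity. Suppose there exist $x, y \in M_T$ with $Ax \in (Tx)^+$ and $Ay \in (Ty)^-$. I would argue by contradiction: if $T \not\perp_B A$, then $\|T + \lambda_0 A\| < \|T\|$ for some $\lambda_0 \in \mathbb{R}$. Without loss of generality take $\lambda_0 > 0$ (the case $\lambda_0 < 0$ is symmetric, using $y$ in place of $x$). Then for the vector $x \in M_T$ we would have $\|Tx + \lambda_0 Ax\| \leq \|T + \lambda_0 A\| < \|T\| = \|Tx\|$, contradicting $Ax \in (Tx)^+$, which says $\|Tx + \lambda Ax\| \geq \|Tx\|$ for all $\lambda \geq 0$. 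Hence $T \perp_B A$.

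\medskip

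\noindent\textbf{The ``only if'' direction.} Assume $T \perp_B A$, i.e.\ $\|T + \lambda A\| \geq \|T\|$ for all $\lambda \in \mathbb{R}$. First I would record that since $\mathbb{X}$ is reflexive and $T$ is compact, $M_T \neq \emptyset$: take $x_n \in S_{\mathbb{X}}$ with $\|Tx_n\| \to \|T\|$, pass to a weakly convergent subsequence $x_n \rightharpoonup x_0$ (reflexivity, Banach--Alaoglu), and use that a compact operator maps weakly convergent sequences to norm convergent ones, so $\|Tx_0\| = \lim\|Tx_n\| = \|T\|$ and $\|x_0\| \leq 1$ forces $x_0 \in M_T$. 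Now fix a sequence $\lambda_n \downarrow 0$ with $\lambda_n > 0$. Since $\|T + \lambda_n A\| \geq \|T\|$ and $\|T+\lambda_n A\| \leq \|T\| + \lambda_n\|A\| \to \|T\|$, for each $n$ choose $u_n \in S_{\mathbb{X}}$ with $\|(T+\lambda_n A)u_n\| \to \|T\|$ fast enough (a diagonal/near-maximizing choice); more precisely pick $u_n$ with $\|(T+\lambda_n A)u_n\| \geq \|T + \lambda_n A\| - \lambda_n^2$. Using reflexivity and compactness of both $T$ and $A$, pass to a subsequence so that $u_n \rightharpoonup u$, hence $Tu_n \to Tu$ and $Au_n \to Au$ in norm. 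Then $\|Tu\| = \lim \|Tu_n\|$. From $\|Tu_n + \lambda_n Au_n\| \geq \|T\| - \lambda_n^2$ and $\lambda_n \|Au_n\| \to 0$ we get $\|Tu_n\| \geq \|T\| - \lambda_n^2 - \lambda_n\|A\| \to \|T\|$, so $\|Tu\| = \|T\|$, i.e.\ $u \in M_T$. It remains to extract from the inequalities $\|Tu_n + \lambda_n Au_n\| \geq \|T\| - \lambda_n^2$ the conclusion $Au \in (Tu)^+$; this is where the bulk of the work lies.

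\medskip

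\noindent\textbf{Main obstacle and how to handle it.} The delicate point is the last step: translating the one-sided norming behaviour ``$\|Tu_n + \lambda_n Au_n\|$ is nearly maximal for small positive $\lambda_n$'' into the pointwise statement $\|Tu + \mu A u\| \geq \|Tu\|$ for \emph{all} $\mu \geq 0$. The natural approach is a convexity/rescaling argument: fix $\mu > 0$, write $Tu + \mu Au$ as a point on the ray, and use that $t \mapsto \|Tu_n + t A u_n\|$ is convex with value at least $\|T\| - \lambda_n^2$ at $t = \lambda_n$ and at most $\|T\| + t\|A\|$ generally; combined with $\|Tu_n\| \to \|T\|$, a convexity estimate forces $\|Tu_n + \mu A u_n\| \geq \|Tu_n\| - o(1)$ for fixed $\mu$ as $n \to \infty$ (intuitively, if the convex function dips below its value at $0$ on $(0,\mu]$, it cannot recover to be near-maximal at the much smaller point $\lambda_n$). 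Passing to the limit via $Tu_n \to Tu$, $Au_n \to Au$ then yields $\|Tu + \mu Au\| \geq \|Tu\| = \|T\|$, so $Au \in (Tu)^+$. Running the entire argument with $\lambda_n \uparrow 0$ instead produces the second vector $y \in M_T$ with $Ay \in (Ty)^-$, completing the proof. I expect the convexity bookkeeping in this final step — making the $o(1)$ error uniform in the relevant range and correctly chaining the inequalities at $\lambda_n$, $0$, and $\mu$ — to be the only real obstacle; everything else is a standard reflexivity-plus-compactness norm-attainment argument.
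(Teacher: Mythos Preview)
Your approach is correct and essentially the same as the paper's: select (near-)norming vectors for $T+\lambda_n A$ with $\lambda_n \downarrow 0$, extract a weak limit via reflexivity, use compactness of $T$ and $A$ for norm convergence, and finish with a convexity argument along the ray. The paper streamlines your last step by noting that $T+\tfrac{1}{n}A$, being compact on a reflexive space, actually \emph{attains} its norm at some $x_n$, so that $\|Tx_n+\tfrac{1}{n}Ax_n\| \geq \|T\| \geq \|Tx_n\|$ holds exactly and convexity immediately yields $\|Tx_n+\lambda Ax_n\| \geq \|Tx_n\|$ for every $\lambda \geq \tfrac{1}{n}$, eliminating the $\lambda_n^2$ error you would otherwise have to carry through the limit.
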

\begin{proof}
Let us first prove the easier sufficient part. \\
Since $Ax\in (Tx)^+,~\|T+\lambda A\|\geq \|Tx+ \lambda Ax\|\geq \|Tx\|=\|T\|$ for all $\lambda \geq 0$. Similarly $Ay\in (Ty)^-$ implies that $\|T+\lambda A\|\geq \|Ty+ \lambda Ay\|\geq \|Ty\|=\|T\|$ for all $\lambda \leq 0$. This completes the proof of the sufficient part. \\
Let us now prove the necessary part. \\
Since $T$ and $A$ are compact linear operators, $(T+\frac{1}{n} A)$ is also a compact linear operator for each $ n \in \mathbb{N}. $ Since $\mathbb{X}$ is reflexive, $(T+\frac{1}{n} A)$ attains norm for each $n \in \mathbb{N}$. \\
Therefore, for each $n\in \mathbb{N},$ there exists $x_n\in S_\mathbb{X}$ such that $\|(T+ \frac{1}{n}A)x_n\|=\|T+ \frac{1}{n}A\|$. \\
Since $\mathbb{X}$ is reflexive, $B_\mathbb{X}$ is weakly compact. Therefore $\{x_n\}$ has a weakly convergent subsequence. Without loss of generality we may assume that $\{x_n\}$ weakly converges to $x$ (say). Since $T$ and $A$ are compact linear operators, $Tx_n\rightarrow Tx$ and  $Ax_n\rightarrow Ax$. Since $T\bot_B A$, $\|T+ \frac{1}{n}A\|\geq \|T\|$ for all $n\in \mathbb{N}$. \\
Therefore, $\|(T+ \frac{1}{n}A)x_n\|=\|T+ \frac{1}{n}A\|\geq \|T\|\geq \|Tx_n\|$ for all $n\in \mathbb{N}$. Letting $n\rightarrow \infty$, we see that, $\|Tx\|\geq \|T\|\geq \|Tx\|$. This proves that $\|Tx\|=\|T\|,$ i.e., $x\in M_T$. \\
Now we show that $Ax\in (Tx)^+$. \\
For any $\lambda \geq \frac{1}{n},$ we claim that $\|Tx_n+ \lambda Ax_n\|\geq \|Tx_n\|$. \\
Otherwise, $Tx_n+ \frac{1}{n} Ax_n= (1- \frac{1}{n\lambda}) Tx_n+ \frac{1}{n\lambda}(Tx_n+ \lambda Ax_n)$ gives that, \\
 $ \|T+\frac{1}{n}A\|=\|Tx_n+ \frac{1}{n} Ax_n\|\leq (1- \frac{1}{n\lambda}) \|Tx_n\|+ \frac{1}{n\lambda}\|(Tx_n+ \lambda Ax_n)\|<(1- \frac{1}{n\lambda}) \|Tx_n\|+ \frac{1}{n\lambda}\|Tx_n\|= \|Tx_n\| \leq \|T\|$, a contradiction. This completes the proof of our claim. \\
Now for any $\lambda> 0$, there exists $n_0 \in \mathbb{N}$ such that $\lambda>\frac{1}{n_0}$. So for all $n\geq n_0$, $\|Tx_n+ \lambda Ax_n\|\geq \|Tx_n\|$. Therefore, letting $n\rightarrow \infty$, we have, $\|Tx+ \lambda Ax\|\geq \|Tx\|$. This completes the proof of the fact that $Ax\in (Tx)^+$. \\
Similarly, considering the compact operators $T-\frac{1}{n} A,$ it is now easy to see that there exists $y\in M_T$ such that $Ay\in (Ty)^-$. This completes the proof.
\end{proof}
\begin{remark}
Theorem 2.2 of \cite{S} now follows easily as a simple consequence of the above theorem since every finite dimensional normed linear space is reflexive and every bounded linear operator there is a compact linear operator.
\end{remark}
For bounded linear operators defined on a normed linear space, the situation is far more complicated since in this case the norm attainment set may be empty. In the next proposition, we give a  sufficient condition for Birkhoff-James orthogonality of  bounded linear operators. 
\begin{prop}
Let $\mathbb{X}$ and $\mathbb{Y}$ be normed linear spaces. Let $T$, $A\in \mathbb{B}(\mathbb{X}, \mathbb{Y})$. Suppose there exists two sequences $\{x_n\}$ and $\{y_n\}$ in $S_\mathbb{X}$ satisfying the following two conditions: \\
$(i)\| Tx_n\| \rightarrow \|T\|$ and $\|Ty_n\|\rightarrow\|T\|$, as $n \rightarrow \infty $ \\
$(ii)Ax_n\in(Tx_n)^+$ and $Ay_n\in(Ty_n)^-$ for all  $n\in \mathbb{N}$. \\
Then $T\bot_B A$. 
\end{prop}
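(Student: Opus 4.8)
The plan is to prove the conclusion directly from the definition of Birkhoff--James orthogonality, namely to show $\|T+\lambda A\|\geq \|T\|$ for every $\lambda\in\mathbb{R}$, splitting into the cases $\lambda\geq 0$ and $\lambda\leq 0$ and using the sequence $\{x_n\}$ for the former and $\{y_n\}$ for the latter. This mirrors the sufficiency part of Theorem~\ref{theorem:compact}, the only genuinely new point being that we must pass to a limit along the given sequences rather than evaluate at a single norm-attaining vector.

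First I would fix $\lambda\geq 0$. For each $n\in\mathbb{N}$, since $Ax_n\in(Tx_n)^+$, Definition~1.2 gives $\|Tx_n+\lambda Ax_n\|\geq \|Tx_n\|$. On the other hand, $x_n\in S_{\mathbb{X}}$ forces $\|Tx_n+\lambda Ax_n\|=\|(T+\lambda A)x_n\|\leq \|T+\lambda A\|$. Combining, $\|T+\lambda A\|\geq \|Tx_n\|$ for every $n$; letting $n\to\infty$ and invoking condition $(i)$, that is $\|Tx_n\|\to\|T\|$, we conclude $\|T+\lambda A\|\geq \|T\|$. For $\lambda\leq 0$ I would run the identical argument with $\{y_n\}$ in place of $\{x_n\}$: from $Ay_n\in(Ty_n)^-$ we obtain $\|Ty_n+\lambda Ay_n\|\geq \|Ty_n\|$, hence $\|T+\lambda A\|\geq \|Ty_n\|\to\|T\|$. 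Together the two cases give $\|T+\lambda A\|\geq \|T\|$ for all $\lambda\in\mathbb{R}$, i.e. $T\perp_B A$.

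There is essentially no obstacle here; the statement is a soft consequence of the definitions. The only point requiring a little care is the order of quantifiers: the inequality $\|T+\lambda A\|\geq \|Tx_n\|$ must be established for each fixed $n$ \emph{before} taking the limit, so that the convergence in $(i)$ can then be applied to the constant-in-$n$ left-hand side. Note in particular that, in contrast to Theorem~\ref{theorem:compact}, this direction requires neither compactness nor reflexivity nor norm attainment, which is precisely why it persists in the general setting where $M_T$ may be empty.
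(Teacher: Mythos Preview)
Your proof is correct and follows essentially the same approach as the paper's own proof: split into $\lambda\geq 0$ and $\lambda\leq 0$, use the definitions of $(Tx_n)^+$ and $(Ty_n)^-$ to bound $\|T+\lambda A\|$ below by $\|Tx_n\|$ (resp.\ $\|Ty_n\|$), and pass to the limit using condition~$(i)$. Your added remarks on the order of quantifiers and on the irrelevance of compactness and reflexivity are accurate but do not alter the argument.
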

\begin{proof}
Since $Ax_n\in (Tx_n)^+$, for any $\lambda \geq 0$ we have, $\|T+ \lambda A\| \geq \|Tx_n+ \lambda Ax_n\| \geq \|Tx_n\|$ for all $n \in \mathbb{N}$. Therefore letting $n\rightarrow \infty$, we have, $\|T+ \lambda A\| \geq \|T\|$, since $\| Tx_n\| \rightarrow \|T\|$ as $n\rightarrow \infty$. \\
Similarly, $Ay_n\in (Ty_n)^{-}$ implies that, for any $\lambda \leq 0$, $\|T+ \lambda A\| \geq \|Ty_n+ \lambda Ay_n\| \geq \|Ty_n\|$ for all $n \in \mathbb{N}$. Therefore letting $n\rightarrow \infty$, we have, $\|T+ \lambda A\| \geq \|T\|$, since $\| Ty_n\| \rightarrow \|T\|$ as $n\rightarrow \infty$. This completes the proof of the fact that  $T\bot_B A$.  
\end{proof}
In the next example we illustrate the fact that the conditions stated in Proposition $ 2.2 $ are only sufficient but not necessary for $ T \perp_B A. $
\begin{example}
Define $T, A : l_1 \rightarrow l_1$ by 
$Te_n = (1- \frac{1}{n+1})e_n$,  $n\geq 1$, and 
$Ae_n = \frac{1}{n+1} e_n$, $n\geq 1$, where $e_n =(0,0, \ldots ,0,1,0, \ldots ),$ with $1$ in the $n$-th position and $0$ elsewhere. \\
Let $x= \sum \limits_{n=1}\limits ^\infty a_n e_n \in l_1$ where $a_n \in \mathbb{R}$ for all $n$. Then $\|Tx\| =\sum \limits_{n=1}\limits ^\infty |a_n| |1- \frac{1}{n+1}| \leq \sum \limits_{n=1}\limits ^\infty |a_n| = \|x\|$. So $\|T\| \leq 1$. Also $\|Te_n\|= (1- \frac{1}{n+1}) \rightarrow 1$ as $n \rightarrow \infty$. Hence $\|T\|= 1$. \\
First we show that $ T \perp_B A.$ For any scalar $ \lambda  $, $ \| T + \lambda A \| \geq \| ( T + \lambda A ) e_n \| = | 1 - \frac{(1 - \lambda)}{n+1} | \longrightarrow 1 $, so that $ \| T + \lambda A \| \geq \|T\| $ for all $ \lambda $. 
Now for  $ x \in l_1 \setminus \{0\} , $ we get $\|(T- A) x\|= \sum \limits_{n=1}\limits ^\infty |a_n| |1- \frac{2}{n+1} | <  \sum \limits_{n=1}\limits ^\infty |a_n| |1- \frac{1}{n+1}| = \|Tx\|$, which implies that $\|(T- A) x\| < \|Tx\|$ for all $x\in l_1 \setminus \{0\}$. In particular, we must have, $Ax \notin (Tx)^-$ for all $x\in l_1 \setminus \{0\}$. \\
Thus, we observe that in this example, although $ T \perp_{B} A, $ it is not possible to find a sequence $ \{y_{n}\} $ in $ S_{\mathbb{X}} $ such that $ \| Ty_n \| \rightarrow \|T\| $ as $ n \rightarrow \infty $ and $Ay_n\in(Ty_n)^-$ for all  $n\in \mathbb{N}$.
\end{example}

Next we characterize Birkhoff-James orthogonality of rotund points in the space of bounded linear operators. First we need the following lemma, which also gives a characterization of rotund points. Note that the proof of the lemma can be obtained similarly as the proof of Theorem $2.4$ of \cite{SPJ}.
\begin{lemma}\label{lemma:rotund}
Let $\mathbb{X}$ be a normed linear space.Then $x\in S_\mathbb{X}$ is a rotund point if and only if $x\bot_B y \Rightarrow x\bot_{SB} y$ for any $y\in \mathbb{X} \setminus \{\theta\}$. 
\end{lemma}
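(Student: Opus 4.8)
The plan is to prove both implications by relating the rotund point condition to Birkhoff-James orthogonality through the geometry of supporting hyperplanes, following the template of Theorem $2.4$ of \cite{SPJ}. For the forward direction, suppose $x \in S_{\mathbb{X}}$ is a rotund point and $x \perp_B y$ for some $y \in \mathbb{X} \setminus \{\theta\}$. I want to show $\|x + \lambda y\| > \|x\|$ for all $\lambda \neq 0$, so suppose for contradiction that $\|x + \lambda_0 y\| = \|x\| = 1$ for some $\lambda_0 \neq 0$. The key observation is that since $x \perp_B y$, convexity of the norm forces $\|x + \lambda y\| = 1$ for every $\lambda$ between $0$ and $\lambda_0$ (the function $\lambda \mapsto \|x + \lambda y\|$ is convex, equals $1$ at $0$ and at $\lambda_0$, and is bounded below by $1 = \|x\|$ on all of $\mathbb{R}$ by the orthogonality hypothesis, hence is constant $= 1$ on the segment). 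In particular, taking $\lambda = \lambda_0/2$, both $x + \lambda_0 y$ and $x$ lie in $S_{\mathbb{X}}$, and their midpoint $x + \tfrac{\lambda_0}{2} y$ also has norm $1$. By the definition of rotund point (Definition $1.3$), this yields $x + \lambda_0 y = x$, i.e. $\lambda_0 y = \theta$, contradicting $\lambda_0 \neq 0$ and $y \neq \theta$. Hence $x \perp_{SB} y$.

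For the converse, suppose $x \in S_{\mathbb{X}}$ is not a rotund point; I must produce some $y \in \mathbb{X} \setminus \{\theta\}$ with $x \perp_B y$ but $x \not\perp_{SB} y$. Since $x$ is not rotund, there exists $z \in S_{\mathbb{X}}$ with $z \neq x$ and $\left\| \tfrac{x+z}{2} \right\| = 1$. Set $y = z - x$; then $y \neq \theta$. First, $x \not\perp_{SB} y$ is immediate because $\|x + 1\cdot y\| = \|z\| = 1 = \|x\|$, so the strict inequality in Definition $1.6$ fails at $\lambda = 1$. It remains to check $x \perp_B y$, i.e. $\|x + \lambda(z - x)\| = \|(1-\lambda)x + \lambda z\| \geq 1$ for all $\lambda \in \mathbb{R}$. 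For $\lambda \in [0,1]$ this is where the midpoint hypothesis does the work: one first shows $\|(1-\lambda)x + \lambda z\| = 1$ on $[0,1]$ (convexity of $\lambda \mapsto \|(1-\lambda)x + \lambda z\|$ pins it between its value $1$ at the endpoints and — using that the midpoint value is exactly $1$, not less — forces it to be identically $1$), and for $\lambda \notin [0,1]$ one uses convexity again together with the fact that the function equals $1$ on the whole interval $[0,1]$ to conclude it is $\geq 1$ outside. Thus $x \perp_B y$ while $x \not\perp_{SB} y$, completing the contrapositive.

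The only mild subtlety — the step I would flag as requiring the most care — is the repeated use of the elementary but crucial fact that a convex function on $\mathbb{R}$ which attains a common value at two points and at their midpoint must be constant on the segment joining them, and is then monotone (hence $\geq$ that value) outside. This is what converts the single equality $\left\|\tfrac{x+z}{2}\right\| = 1$ into the full Birkhoff-James relation $x \perp_B (z-x)$, and symmetrically converts $x \perp_B y$ together with one equality $\|x + \lambda_0 y\| = \|x\|$ into flatness of $S_{\mathbb{X}}$ along that whole segment so that the rotund-point hypothesis can be invoked at the midpoint. Everything else is bookkeeping with the definitions of $\perp_B$, $\perp_{SB}$, and rotundity. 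Since the statement explicitly notes that the argument mirrors the proof of Theorem $2.4$ of \cite{SPJ}, I would present the two directions concisely as above rather than reproduce that argument in full.
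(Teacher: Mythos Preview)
Your proposal is correct and follows precisely the approach the paper indicates, namely the argument of Theorem~2.4 of \cite{SPJ}: the forward direction uses convexity of $\lambda \mapsto \|x+\lambda y\|$ together with $x\perp_B y$ to force a flat segment on $S_{\mathbb{X}}$ containing $x$, contradicting rotundity, while the converse takes the witnessing $z\in S_{\mathbb{X}}$ from the failure of rotundity and sets $y=z-x$. Since the paper itself gives no explicit proof but only this reference, your write-up is in fact more detailed than what appears there, and the convexity facts you flag are exactly the right ingredients.
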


In the following theorem, we obtain the  characterization of Birkhoff-James orthogonality of rotund points in the space of bounded linear operators defined between infinite dimensional normed linear spaces.
\begin{theorem}\label{theorem:rotund}
Let $\mathbb{X}$ and $\mathbb{Y}$ be two normed linear spaces. Let $T$ be a rotund point of $\mathbb{B} (\mathbb{X}, \mathbb{Y})$. Then for any $A \in \mathbb{B} (\mathbb{X}, \mathbb{Y}) , ~T \bot_B A$ if and only if there exists two sequences $\{x_n\}$, $\{y_n\}$ in $S_\mathbb{X}$ and two sequences of positive real numbers $\{\epsilon_n\}$ , $\{\delta_n\}$ such that \\ 
$(i)  \epsilon_n \rightarrow 0$ , $\delta_n\rightarrow 0$ as $n\rightarrow \infty$ .\\
$(ii) \| Tx_n\| \rightarrow \|T\|$ and $\|Ty_n\|\rightarrow\|T\|$ as $n\rightarrow \infty$ .\\
$(iii)Ax_n\in(Tx_n)^{+(\epsilon_n)}$ and $Ay_n\in(Ty_n)^{-(\delta_n)}$ for all  $n\in \mathbb{N}$.
\end{theorem}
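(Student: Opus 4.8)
The plan is to mimic the structure of Theorem~\ref{theorem:compact}, but since norm attainment need not hold in the infinite dimensional bounded operator setting, I would replace ``norm attainment'' by ``approximate norm attainment'' and exploit that rotundity of $T$ upgrades Birkhoff--James orthogonality to strong Birkhoff--James orthogonality (Lemma~\ref{lemma:rotund}). The sufficiency direction is the easy half: given the sequences $\{x_n\},\{y_n\}$ and the null sequences $\{\epsilon_n\},\{\delta_n\}$, for $\lambda \geq 0$ one has $\|T+\lambda A\| \geq \|Tx_n+\lambda Ax_n\| \geq \sqrt{1-\epsilon_n^2}\,\|Tx_n\|$ because $Ax_n \in (Tx_n)^{+(\epsilon_n)}$; letting $n\to\infty$ and using $\|Tx_n\|\to\|T\|$ together with $\epsilon_n\to 0$ gives $\|T+\lambda A\| \geq \|T\|$. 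The symmetric argument with $\{y_n\}$, $\{\delta_n\}$ handles $\lambda \leq 0$, so $T\perp_B A$.

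For necessity, suppose $T\perp_B A$. Since $T$ is a rotund point, Lemma~\ref{lemma:rotund} gives $T\perp_{SB} A$, i.e.\ $\|T+\lambda A\| > \|T\|$ for all $\lambda \neq 0$. Fix a sequence $\lambda_n \downarrow 0$ of positive reals. For each $n$, $\|T+\lambda_n A\| > \|T\|$, and by definition of operator norm there exists $x_n \in S_{\mathbb{X}}$ with $\|(T+\lambda_n A)x_n\|$ as close to $\|T+\lambda_n A\|$ as we like; choosing it close enough we may arrange $\|(T+\lambda_n A)x_n\| > \|T\|$ and simultaneously $\|(T+\lambda_n A)x_n\| \to \|T\|$ (possible since $\|T+\lambda_n A\|\to\|T\|$ as $\lambda_n\to 0$). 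From $\|(T+\lambda_n A)x_n\| \leq \|Tx_n\| + \lambda_n\|A\|$ and $\|(T+\lambda_n A)x_n\|>\|T\|\geq\|Tx_n\|$, we get $\|T\| - \lambda_n\|A\| < \|Tx_n\| \leq \|T\|$, so $\|Tx_n\|\to\|T\|$, giving condition $(ii)$ for $\{x_n\}$. It remains to produce $\epsilon_n\to 0$ with $Ax_n \in (Tx_n)^{+(\epsilon_n)}$, i.e.\ $\|Tx_n + \lambda Ax_n\| \geq \sqrt{1-\epsilon_n^2}\,\|Tx_n\|$ for all $\lambda\geq 0$. The key convexity trick, exactly as in Theorem~\ref{theorem:compact}: for $\lambda \geq \lambda_n$, write $Tx_n + \lambda_n Ax_n = (1-\tfrac{\lambda_n}{\lambda})Tx_n + \tfrac{\lambda_n}{\lambda}(Tx_n+\lambda Ax_n)$, whence $\|(T+\lambda_n A)x_n\| \leq (1-\tfrac{\lambda_n}{\lambda})\|Tx_n\| + \tfrac{\lambda_n}{\lambda}\|Tx_n+\lambda Ax_n\|$, which rearranges to $\|Tx_n+\lambda Ax_n\| \geq \|Tx_n\| + \tfrac{\lambda}{\lambda_n}\big(\|(T+\lambda_n A)x_n\| - \|Tx_n\|\big) \geq \|Tx_n\|$, since $\|(T+\lambda_n A)x_n\| > \|T\| \geq \|Tx_n\|$. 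So for $\lambda \geq \lambda_n$ the inequality holds outright. For $0 \leq \lambda < \lambda_n$ one has the crude bound $\|Tx_n + \lambda Ax_n\| \geq \|Tx_n\| - \lambda\|A\| \geq \|Tx_n\| - \lambda_n\|A\|$; dividing by $\|Tx_n\|$ (which is bounded away from $0$ for large $n$) this is at least $1 - \lambda_n\|A\|/\|Tx_n\|$, a quantity tending to $1$. Hence choosing $\epsilon_n = \sqrt{1 - (1-\lambda_n\|A\|/\|Tx_n\|)^2}$ (truncated to lie in $[0,1)$, and handling the trivial case $T=0$ separately) works for both ranges of $\lambda$, and $\epsilon_n\to 0$. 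The construction of $\{y_n\},\{\delta_n\}$ is identical upon replacing $\lambda_n$ by $-\lambda_n$ and using $\|T-\lambda_n A\| > \|T\|$.

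The main obstacle I anticipate is the bookkeeping at small $\lambda$: the clean convexity argument only controls $\lambda \geq \lambda_n$, and one must verify that the inequality $Ax_n \in (Tx_n)^{+(\epsilon_n)}$ can be salvaged on $0 \leq \lambda < \lambda_n$ with the \emph{same} $\epsilon_n$ while still having $\epsilon_n \to 0$; the Lipschitz estimate $\|Tx_n+\lambda Ax_n\| \geq \|Tx_n\| - \lambda_n\|A\|$ does exactly this, but it needs $\|Tx_n\|$ uniformly bounded below, which follows from $\|Tx_n\|\to\|T\|$ only when $\|T\|\neq 0$, so the degenerate case $T = 0$ (where the theorem is vacuous) must be excised. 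A secondary subtlety is that the near-norm-attaining $x_n$ must be chosen so that $\|(T+\lambda_n A)x_n\|$ strictly exceeds $\|T\|$ — this is where $T\perp_{SB}A$ (hence rotundity of $T$) is essential and is precisely what fails for merely Birkhoff--James orthogonal, non-rotund $T$.
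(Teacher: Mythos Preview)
Your proposal is correct and follows essentially the same route as the paper: sufficiency by passing to the limit in $\|T+\lambda A\|\geq\sqrt{1-\epsilon_n^2}\,\|Tx_n\|$, necessity by invoking Lemma~\ref{lemma:rotund} to get $\|T+\lambda_n A\|>\|T\|$, choosing $x_n\in S_{\mathbb X}$ with $\|(T+\lambda_n A)x_n\|>\|T\|$, using the convexity trick for $\lambda\geq\lambda_n$ and the Lipschitz bound for $0\leq\lambda<\lambda_n$, and setting $\epsilon_n=\sqrt{1-(1-\lambda_n\|A\|/\|Tx_n\|)^2}$; the paper simply takes $\lambda_n=1/n$. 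One cosmetic remark: a rotund point lies on the unit sphere by definition, so $\|T\|=1$ automatically and the ``degenerate case $T=0$'' you mention cannot occur (the paper notes this in one line), while the case $A=0$ is trivial and should be set aside before invoking Lemma~\ref{lemma:rotund}.
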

\begin{proof}
Let us first prove the easier sufficient part.\\
Since $ Ax_n\in(Tx_n)^{+(\epsilon_n)} $ for all $n\in \mathbb{N}, $ $\|Tx_n+\lambda Ax_n\|\geq \sqrt[]{1-\epsilon_n^2}~\|Tx_n\|$ for all $\lambda\geq0.$\\
This implies, given any $ \lambda \geq 0, $ $ \|T+\lambda A\|\geq\|(T+\lambda A)x_n\| \geq\sqrt[]{1-\epsilon_n^2} \|Tx_n\|$. Since $ \epsilon_n \rightarrow 0 $ and $ \|Tx_n\| \rightarrow \| T \| $ as $ n \rightarrow \infty, $ we obtain, 
     \[ \|T+\lambda A \|\geq \|T\|. \]
Similarly, $ Ay_n\in(Ty_n)^{-(\delta_n)} $ for all $n\in \mathbb{N}$ implies that, for any $ \lambda < 0, $ 
\[ \|T+\lambda A \|\geq \|T\|. \]
This completes the proof of the sufficient part. \\
Let us now prove the more involved necessary part. \\
Since $T\in \mathbb{B} (\mathbb{X}, \mathbb{Y})$ is a rotund point, clearly $ T $ is nonzero. Also by Lemma \ref{lemma:rotund}, $T\bot _B A\Rightarrow T\bot_{SB} A$ for any nonzero $ A \in \mathbb{B} (\mathbb{X}, \mathbb{Y}). $ \\
Therefore for each $n\in \mathbb{N}$, 
\[\|T+ \frac{1}{n}A\| >\|T\|. \]
This implies that for each $ n \in \mathbb{N}, $ there exists $x_n\in S_\mathbb{X}$ such that 
$\|(T+\frac{1}{n}A)x_n\|>\|T\|.$
We claim that $\|Tx_n\|\rightarrow \|T\|$. Indeed, $\|Tx_n\|=\|(T+\frac{1}{n}A)x_n - \frac{1}{n}Ax_n\|$ $\geq \|(T+\frac{1}{n}A)x_n\| - \frac{1}{n} \|Ax_n\|$ $>\|T\|- \frac{1}{n} \|A\|$ $\rightarrow \|T\|$ as $n \rightarrow \infty. $ Since $ x_n \in S_{\mathbb{X}},$ $ \| Tx_n \| \leq \| T \|  $. This proves our claim. \\
Since $\|Tx_n\| \rightarrow \|T\|>0,$ there exists $n_1\in \mathbb{N}$ such that $\|Tx_n\| > \frac{\|T\|}{2} > 0$ for all $n \geq n_1$. Choose $n_2 \in \mathbb{N}$ such that $n_2 > \frac{2 \|A\|}{\|T\|}$. Let $n_0 =$ max$\{ n_1,n_2\}$.
Then for all $n\geq n_0$, $0< \frac{\|A\|}{n \|Tx_n\|} < \frac{2 \|A\|}{n \|T\|}< 1$, which implies that for all $n\geq n_0$, $0< 1 - \frac{ \|A\|}{n \|Tx_n\|}< 1$. \\
Choose $\epsilon_n=\sqrt[]{1- (1- \frac{\|A\|}{n \|Tx_n\|})^2}$. Then clearly $\epsilon_n \rightarrow 0$ as $n \rightarrow \infty$ .\\
We claim that $Ax_n\in(Tx_n)^{+(\epsilon_n)}$ for all $n\geq n_0$ .\\
Let $n\geq n_0$. Then for $0< \lambda \leq \frac{1}{n}$, \\
$\|Tx_n+\lambda Ax_n\|\geq \|Tx_n\|-|\lambda | \|Ax_n\|\geq \|Tx_n\|-\frac{1}{n} \|A\|=\sqrt{1-\epsilon_n^2} \|Tx_n\|$. \\
For $\lambda > \frac{1}{n} > 0$, we claim that $\|Tx_n+ \lambda Ax_n\| \geq \|Tx_n\|. $ \\
Suppose on the contrary, we have, $\|Tx_n+ \lambda Ax_n\| < \|Tx_n\| $ for some $ \lambda > \frac{1}{n}. $ Now, there exists $ t \in (0, 1) $ such that $Tx_n+  \frac{1}{n} Ax_n = t(Tx_n)+ (1-t)(Tx_n+\lambda Ax_n)$. This implies that, $\|Tx_n+ \frac{1}{n} Ax_n \| \leq t \|Tx_n\|+ (1-t)\|Tx_n+ \lambda Ax_n\|< t \|Tx_n\| +(1-t) \|Tx_n\|= \|Tx_n\| \leq \|T\|$, a contradiction. \\
Therefore, for all $\lambda \geq 0$, $\|Tx_n+ \lambda Ax_n\| \geq \|Tx_n\|-\frac{1}{n} \|A\|=\sqrt[]{1-\epsilon_n^2} ~\|Tx_n\|$. This completes the proof of our claim. \\
Similarly, considering $\|T-\frac{1}{n} A\|> \|T\|$ for each $n \in \mathbb{N}$, we can find the desired sequences $\{y_n\} $ in $ S_{\mathbb{X}}$ and $\{\delta _n\} $ in $\mathbb{R}^+$ such that all the conditions $ (i), (ii) $ and $ (iii) $ are satisfied. 
\end{proof} 
Next we illustrate with an example to show that the conditions mentioned in Theorem \ref{theorem:rotund} is not sufficient to characterize Birkhoff-James orthogonality of bounded linear operators.
\begin{example}\label{example:not rotund}
Define $T, A : \ell_1 \rightarrow \ell_1$ by 
$ Te_1 = \frac{1}{2}e_1,~Te_n = (1- \frac{1}{n^4})e_n$,  $n\geq 2$, and 
$ Ae_1 = \frac{1}{2}e_1,~Ae_n = \frac{1}{n^2} (\frac{1}{n^2}-1)e_n$, $n\geq 2$, where $e_n =(0,0, \ldots ,0,1,0, \ldots ),$ with $1$ in the $n$-th position and $0$ elsewhere. 
Then both $T$ and $A$ are bounded linear operators on  $\ell_1$ with $\|T\| =1.$ Also $ T \bot_B A$, since $\|T + \lambda A \| \geq 
\| (T + \lambda A)e_n \| = \| (1 - \frac{1}{n^4})e_n + \lambda \frac{1}{n^2}(\frac{1}{n^2}-1) e_n \| = 
\mid 1 - \frac{1}{n^4} + \frac{\lambda}{n^4} - \frac{\lambda}{n^2} \mid \longrightarrow 1 = \|T\|.$ \\
Now for each $x = (a_1,a_2,\ldots,\dots) \in S_{\ell_1} $ we have
\begin{eqnarray*}
\|Tx + Ax\| & = & \| (a_1, (1-\frac{1}{2^2})a_2,\ldots,(1-\frac{1}{n^2})a_n,\ldots ) \| \\
            & = & \mid a_1 \mid + \mid a_2 \mid (1-\frac{1}{2^2}) + \ldots + \mid a_n \mid (1-\frac{1}{n^2}) +                  \ldots \\
						& = & \mid \frac{a_1}{2} \mid + \mid a_2 \mid (1-\frac{1}{2^4}) + \ldots + \mid a_n \mid (1-\frac{1}{n^4}) +                  \ldots \\
						&   & + \mid \frac{a_1}{2} \mid + \mid a_2 \mid (\frac{1}{2^4}-\frac{1}{2^2}) + \ldots + \mid a_n \mid                    (\frac{1}{n^4}-\frac{1}{n^2}) +                  \ldots \\
						& \leq & \|Tx\| + \frac{1}{2} + \sum_{n=2}^{\infty}(\frac{1}{n^4}-\frac{1}{n^2})\\
						& < & \|Tx\| - 0.06\\
						& \leq & \|T\|-0.06.						
\end{eqnarray*}
This shows that there does not exist $\{\epsilon_n\} $ with $\epsilon_n \longrightarrow 0 $ and $\{x_n\} \subset S_{\ell_1}$ such that $\|Tx_n\| \longrightarrow \|T\| $ and $Ax_n \in (Tx_n)^{+(\epsilon_n)}, $ for otherwise
\[ \sqrt{1 -{\epsilon_n }^2}||Tx_n\| \leq \|Tx_n + Ax_n\| < \|Tx_n\|-0.06, \]
and so letting $n \longrightarrow \infty$ we get a contradiction. Thus $T \bot_B A$ but $T$ does not satisfy hypothesis of Theorem \ref{theorem:rotund}.
\end{example}
The last example indicates that some additional condition is required to obtain
 the most generalized characterization of Birkhoff-James orthogonality of  bounded linear operators defined between infinite dimensional normed linear spaces. We accomplish the goal in the next theorem.
\begin{theorem}\label{theorem:bounded}
Let $\mathbb{X}$ and $\mathbb{Y}$ be two normed linear spaces. Let $T \in \mathbb{B} (\mathbb{X}, \mathbb{Y})$ be nonzero. Then for  any $A \in \mathbb{B} (\mathbb{X}, \mathbb{Y}) ,~ T \bot_B A$ if and only if either of the conditions in $(a)$ or in $(b)$ holds: \\
$(a)$ There exists a sequence $\{x_n\}$ in $S_\mathbb{X}$ such that $\| Tx_n\| \rightarrow \|T\|$ and $\|Ax_n\| \rightarrow 0$ as $n\rightarrow \infty$. \\
$(b)$ There exists two sequences $\{x_n\},~\{y_n\}$ in $S_\mathbb{X}$ and two sequences of positive real numbers $\{\epsilon_n\}$ , $\{\delta_n\}$ such that \\ 
$(i)  \epsilon_n \rightarrow 0$ , $\delta_n\rightarrow 0$ as $n\rightarrow \infty$ .\\
$(ii) \| Tx_n\| \rightarrow \|T\|$ and $\|Ty_n\|\rightarrow\|T\|$ as $n\rightarrow \infty$ .\\
$(iii)Ax_n\in(Tx_n)^{+(\epsilon_n)}$ and $Ay_n\in(Ty_n)^{-(\delta_n)}$ for all  $n\in \mathbb{N}$.
\end{theorem}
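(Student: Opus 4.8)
The plan is to establish the two directions separately, reusing as much machinery as possible from Proposition 2.2 and Theorem \ref{theorem:rotund}. The sufficiency of $(b)$ is essentially the sufficiency argument of Theorem \ref{theorem:rotund} verbatim: if $Ax_n \in (Tx_n)^{+(\epsilon_n)}$ then $\|T + \lambda A\| \geq \sqrt{1-\epsilon_n^2}\,\|Tx_n\| \to \|T\|$ for $\lambda \geq 0$, and symmetrically the $\{y_n\}$, $\{\delta_n\}$ data handle $\lambda \leq 0$, so $T \perp_B A$. The sufficiency of $(a)$ is even easier: $\|T + \lambda A\| \geq \|Tx_n\| - |\lambda|\,\|Ax_n\| \to \|T\|$ for every $\lambda$, so again $T \perp_B A$. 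Thus the real content is the necessary part.

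For necessity, assume $T \perp_B A$ with $T$ nonzero. The idea is to run the argument of Theorem \ref{theorem:rotund} but without the rotundity hypothesis, which was used there only to upgrade $T \perp_B A$ to $T \perp_{SB} A$ and hence to guarantee $\|T + \tfrac1n A\| > \|T\|$ for every $n$. Without rotundity, exactly one of two things happens: either $\|T + \tfrac1n A\| > \|T\|$ for all $n \in \mathbb{N}$ (and, symmetrically for the minus side, $\|T - \tfrac1n A\| > \|T\|$ for all $n$), or there is some $n_0$ with $\|T + \tfrac1{n_0} A\| = \|T\|$ (equivalently $\|T - \tfrac1{n_0} A\| = \|T\|$, since $T \perp_B A$ forces $\|T + \lambda A\| = \|T\|$ on a whole interval around $0$ once it equals $\|T\|$ at one nonzero point, by convexity of $\lambda \mapsto \|T+\lambda A\|$ and $\|T+\lambda A\|\ge\|T\|$). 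In the first case, for $\lambda>0$ one reproduces the construction in Theorem \ref{theorem:rotund} to get $\{x_n\}$, $\{\epsilon_n\}$ with $\|Tx_n\| \to \|T\|$ and $Ax_n \in (Tx_n)^{+(\epsilon_n)}$, and symmetrically $\{y_n\}$, $\{\delta_n\}$ from the minus side; this yields conclusion $(b)$. In the second case we must produce conclusion $(a)$: from $\|T + \lambda_0 A\| = \|T\|$ for some $\lambda_0 \neq 0$ we pick a norming sequence $\{z_n\}$ for $T + \lambda_0 A$, and the standard estimate $\|Tz_n\| \geq \|(T+\lambda_0 A)z_n\| - |\lambda_0|\,\|Az_n\|$ combined with $\|(T+\lambda_0 A)z_n\| \to \|T\| \geq \|Tz_n\|$ does \emph{not} immediately give $\|Az_n\| \to 0$; instead one argues that since $\lambda \mapsto \|T+\lambda A\|$ is convex, equals $\|T\|$ on $[0,\lambda_0]$ (say $\lambda_0>0$), we may look at $\|(T + \tfrac{\lambda_0}{2} A) z_n\| \to \|T\|$ together with $\|(T+\lambda_0 A)z_n\| \to \|T\|$ and $\|Tz_n\| \le \|T\|$; applying the triangle inequality to the three collinear points $Tz_n$, $(T+\tfrac{\lambda_0}{2}A)z_n$, $(T+\lambda_0 A)z_n$ forces $\|Tz_n\| \to \|T\|$, and then re-examining $\|(T+\lambda_0 A)z_n\| \ge \big|\,\|Tz_n\| - |\lambda_0|\,\|Az_n\|\,\big|$ shows $\|Az_n\|\to 0$ provided we also use $\|(T+\lambda_0 A)z_n\|\le \|Tz_n\|+|\lambda_0|\|Az_n\|$ with both outer terms tending to $\|T\|$ — which only gives a lower bound on $\limsup\|Az_n\|\ge 0$, so the genuinely correct route is: $\|T\|=\lim\|(T+\lambda_0 A)z_n\|$ and $\|Tz_n\|\le\|T\|$, hence $\limsup(\|Tz_n\|+|\lambda_0|\|Az_n\|)\ge\|T\|$ gives nothing, but $\|(T-\lambda_0 A)\|\ge\|T\|$ applied to the same $z_n$ yields $\|T\| \le \|(T-\lambda_0 A)z_n\| \le \|Tz_n\| - |\lambda_0|\|Az_n\| + 2|\lambda_0|\|Az_n\|$...

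The cleanest formulation, which I would actually write, is this: in the second case choose $\lambda_0\ne 0$ with $\|T+\lambda_0 A\|=\|T\|$ and norming sequence $\{z_n\}\subset S_{\mathbb X}$ for $T+\lambda_0 A$; then $\|Tz_n\|\le\|T\|$ and $\|(T-\lambda_0 A)z_n\|\le\|T-\lambda_0 A\|$, and from $2(T)z_n=(T+\lambda_0A)z_n+(T-\lambda_0A)z_n$ we get $2\|Tz_n\|\ge 2\|(T+\lambda_0A)z_n\|-\|(T-\lambda_0A)z_n - (T+\lambda_0A)z_n\|$; more usefully, since $\|T+\lambda_0A\|=\|T\|$ forces (by $T\perp_B A$, convexity, and $\|T\|\le\|T+\lambda A\|$) that $\|T+\lambda A\|=\|T\|$ for all $\lambda$ between $0$ and $\lambda_0$, we evaluate at $\lambda_0/2$: $\|(T+\tfrac{\lambda_0}{2}A)z_n\|\le\|T\|$ while $\tfrac12\|Tz_n\|+\tfrac12\|(T+\lambda_0A)z_n\|\ge\|(T+\tfrac{\lambda_0}{2}A)z_n\|\ge \|(T+\lambda_0 A)z_n\|-\tfrac{|\lambda_0|}{2}\|Az_n\|$, which upon $n\to\infty$ pins $\|Tz_n\|\to\|T\|$, and then $\|Az_n\|=\tfrac{1}{|\lambda_0|}\|(T+\lambda_0A)z_n-Tz_n\|\to 0$ follows once we also note $\|(T+\lambda_0A)z_n-Tz_n\|\le\|(T+\lambda_0A)z_n\|+\|Tz_n\| $ is too weak, so instead use that $\|(T+\lambda_0A)z_n\|\to\|T\|$ and $\|Tz_n\|\to\|T\|$ together with the reverse triangle inequality on the segment to conclude $\|\lambda_0 Az_n\|=\|(T+\lambda_0A)z_n-Tz_n\|\to 0$. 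This last step is the main obstacle: extracting $\|Az_n\|\to0$ from equality of norms along the segment requires a careful three-point triangle-inequality argument (the points $Tz_n$, midpoint, $(T+\lambda_0A)z_n$ are collinear with all three norms converging to $\|T\|$, which forces the segment to have length $\to 0$), and I expect to spend most of the write-up making exactly that estimate airtight; everything else is bookkeeping with the constructions already carried out in Theorem \ref{theorem:rotund}.
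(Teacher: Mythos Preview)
Your sufficiency argument is fine and matches the paper. The necessary direction, however, has a genuine gap.

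The dichotomy you set up is incorrect on two levels. First, the parenthetical claim that $\|T+\tfrac{1}{n_0}A\|=\|T\|$ is ``equivalently'' $\|T-\tfrac{1}{n_0}A\|=\|T\|$ is false: convexity together with $\|T+\lambda A\|\ge\|T\|$ only forces equality on the one-sided interval $[0,\tfrac{1}{n_0}]$, not on an interval around $0$. Second, and more seriously, your Case~2 argument does not work. Take the operators from Example~\ref{example:not rotund}: there $\|T+A\|=\|T\|=1$, and $z_n\equiv e_1$ is a norming sequence for $T+A$ since $(T+A)e_1=e_1$; yet $\|Te_1\|=\tfrac12$ and $\|Ae_1\|=\tfrac12$, so neither $\|Tz_n\|\to\|T\|$ nor $\|Az_n\|\to 0$ holds. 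Your ``three collinear points with norms converging to $\|T\|$ force segment length $\to 0$'' heuristic is simply false in a space that is not strictly convex (and you never established that the three norms all converge to $\|T\|$ anyway; in the example above they are $\tfrac12,\tfrac34,1$). So the step you flag as ``the main obstacle'' is not an obstacle to be made airtight---it is a false statement.

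The paper's route avoids this dichotomy altogether. It never asks whether $\|T+\tfrac1nA\|>\|T\|$ strictly; instead it always picks $x_n\in S_{\mathbb X}$ with $\|(T+\tfrac1nA)x_n\|>\|T\|-\tfrac{1}{n^3}$, which is automatic. This gives $\|Tx_n\|\to\|T\|$. Now the split is on whether $(a)$ fails: if it does, then (after a subsequence) $\|Ax_n\|\ge c>0$. This lower bound is the crucial ingredient you are missing---it is used to control the regime $\lambda>2\|T\|/c$ via $\|Tx_n+\lambda Ax_n\|\ge\lambda c-\|Tx_n\|$, while the intermediate range $\tfrac1n\le\lambda\le n$ is handled by a convexity argument yielding $\|Tx_n+\lambda Ax_n\|\ge\|Tx_n\|-\tfrac1n$ (note: only $\ge\|Tx_n\|-\tfrac1n$, not $\ge\|Tx_n\|$ as in Theorem~\ref{theorem:rotund}, because of the $\tfrac{1}{n^3}$ slack). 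The three ranges together give $Ax_n\in(Tx_n)^{+(\epsilon_n)}$ with $\epsilon_n\to 0$, establishing $(b)$.
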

\begin{proof}
We first prove the easier sufficient part.\\
Suppose $(a)$ holds. Now for any scalar $\lambda$, $\|T + \lambda A\| \geq \|Tx_n + \lambda Ax_n\|\geq \|Tx_n\|- |\lambda | \|Ax_n\| \rightarrow \|T\|$ as $n \rightarrow \infty$. Therefore $T \bot_{B} A$. \\
Now suppose $(b)$ holds. Then following the same line of arguments as in the proof of the sufficient part of Theorem \ref{theorem:rotund}, we obtain $T\perp_B A$. 
This completes the proof of the sufficient part. \\
Let us now prove the comparatively trickier necessary part. \\
Suppose $(a)$ does not hold. \\
Without loss of generality let us assume that $\|A\| \leq 1$. \\
Since $T \bot_{B} A$, for any nonzero scalar $\lambda$, $\|T + \lambda A\| \geq \|T\|$. In particular, for each $n \in \mathbb{N}$, 
      \[\|T + \frac{1}{n} A\| > \|T\| - \frac{1}{n^3}. \]
Therefore, for each $n \in \mathbb{N}$, there exists a sequence $\{x_n\} $ in $ S_{\mathbb{X}}$ such that $\|(T+ \frac{1}{n} A) x_n \| > \|T\|- \frac{1}{n^3} \geq \|Tx_n\|- \frac{1}{n^3}$. \\
We claim that $\|Tx_n\|\rightarrow \|T\|$. Indeed, $\|Tx_n\|=\|(T+\frac{1}{n}A)x_n - \frac{1}{n}Ax_n\|$ $\geq \|(T+\frac{1}{n}A)x_n\| - \frac{1}{n} \|Ax_n\|$ $>\|T\|- \frac{1}{n^3} - \frac{1}{n} \|A\|$ $\rightarrow \|T\|$ as $n \rightarrow \infty. $ Since $ x_n \in S_{\mathbb{X}},$ $ \| Tx_n \| \leq \| T \|  $. This proves our claim. \\
Since $(a)$ does not hold, we assume that, $\inf \limits_{n \in \mathbb{N}}\|Ax_n\|= c > 0$. \\
Choose $n_1 \in \mathbb{N}$ such that $n_1 > \frac{2 \|T\|}{c}$. Since $\|Tx_n\| \rightarrow \|T\|>0,$ there exists $n_2\in \mathbb{N}$ such that $\|Tx_n\| > \frac{\|T\|}{2} > 0$ for all $n \geq n_2$. Choose $n_3 \in \mathbb{N}$ such that $n_3 > \frac{2}{\|T\|}$. Let $n_0 =$ max$\{ n_1,n_2, n_3\}$. Then for all $n\geq n_0$, $0< \frac{1}{n \|Tx_n\|} < \frac{2}{n \|T\|}< 1$, which implies that for all $n\geq n_0$, $0< 1 - \frac{1}{n \|Tx_n\|}< 1$. \\
Choose $\epsilon_n=\sqrt[]{1- (1- \frac{1}{n \|Tx_n\|})^2}$. Then clearly $\epsilon_n \rightarrow 0$ as $n \rightarrow \infty$ .\\
We claim that $Ax_n\in(Tx_n)^{+(\epsilon_n)}$ for all $n\geq n_0$ .\\
Let $n \geq n_0$. Then for $0 \leq \lambda < \frac{1}{n}$, \\
$\|Tx_n + \lambda Ax_n\| \geq \|Tx_n\|- \lambda \|Ax_n\| \geq \|Tx_n\|-\frac{1}{n}$. \\
For $\frac{1}{n} \leq \lambda \leq n$, we claim that $\|Tx_n +\lambda Ax_n\| \geq \|Tx_n\|- \frac{1}{n}$. \\
Suppose on the contrary, we have, $\|Tx_n +\lambda Ax_n\| < \|Tx_n\|- \frac{1}{n}$ for some $\frac{1}{n} \leq \lambda \leq n$. Now, $Tx_n + \frac{1}{n} Ax_n = (1- \frac{1}{n \lambda})Tx_n + \frac{1}{n \lambda}(Tx_n +\lambda Ax_n)$. This implies that, $\|Tx_n\|- \frac{1}{n^3} < \|Tx_n + \frac{1}{n} Ax_n\| \leq  (1- \frac{1}{n \lambda})\|Tx_n\| + \frac{1}{n \lambda}\|(Tx_n +\lambda Ax_n)\| < (1- \frac{1}{n \lambda})\|Tx_n\| + \frac{1}{n \lambda}(\|Tx_n\| - \frac{1}{n})= \|Tx_n\|- \frac{1}{n^2 \lambda}$. This implies that $\lambda > n$, a contradiction. \\
Thus for $0 \leq \lambda \leq n$, $\|Tx_n +\lambda Ax_n\| \geq \|Tx_n\|- \frac{1}{n}$. Hence, $0 \leq \lambda \leq \frac{2 \|T\|}{c}$ gives that $\|Tx_n +\lambda Ax_n\| \geq \|Tx_n\|- \frac{1}{n}$. \\
Now, for $\lambda > \frac{2 \|T\|}{c}$, $\|Tx_n +\lambda Ax_n\| \geq \lambda \|Ax_n\|- \|Tx_n\| \geq \lambda c- \|Tx_n\| > 2\|T\|-\|Tx_n\| \geq \|Tx_n\|- \frac{1}{n}$. \\
Therefore for all $\lambda \geq 0$, $\|Tx_n +\lambda Ax_n\| \geq \|Tx_n\|- \frac{1}{n}= \sqrt[]{1-\epsilon_n^2} ~\|Tx_n\|$. This completes the proof of our claim. \\
Similarly, considering $\|T-\frac{1}{n} A\|> \|T\|- \frac{1}{n^3}$ for each $n \in \mathbb{N}$, we can find the desired sequences $\{y_n\} $ in $ S_{\mathbb{X}}$ and $ \{\delta _n\} $ in $ \mathbb{R}^+$ such that all the conditions of $(b)$ are satisfied. 
\end{proof}

As an application of Theorem ~\ref{theorem:rotund} we give a complete characterization of Birkhoff-James orthogonality of bounded linear functionals on a normed linear space, whose dual is strictly convex. First, we need the following easy proposition. 

\begin{prop}\label{prop:real}
For any two real numbers $x$, $y$ and for any $\epsilon \in [0,1) $, $y\in x^{+(\epsilon)}$ if and only if $x y\geq 0$. Similarly, $y\in x^{- (\epsilon)}$ if and only if $x y\leq 0$. 
\end{prop}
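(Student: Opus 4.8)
The plan is to unwind the definition of $x^{+(\epsilon)}$ in the one-dimensional normed space $\mathbb{R}$, where the norm is simply the absolute value, and then argue by cases on the signs of $x$ and $y$. Explicitly, $y\in x^{+(\epsilon)}$ means $|x+\lambda y|\geq\sqrt{1-\epsilon^2}\,|x|$ for every $\lambda\geq 0$, and the feature I would exploit throughout is that $\epsilon\in[0,1)$ forces $\sqrt{1-\epsilon^2}>0$; hence whenever $x\neq 0$ the right-hand side is strictly positive, so the inequality can fail only by driving $x+\lambda y$ to zero for some admissible $\lambda$.

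For the sufficiency direction, suppose $xy\geq 0$. If $x=0$ the inequality $|x+\lambda y|\geq 0$ is trivial; if $y=0$ it reads $|x|\geq\sqrt{1-\epsilon^2}\,|x|$, which holds since $\sqrt{1-\epsilon^2}\leq 1$. Otherwise $x$ and $y$ are both nonzero with the same sign, so for every $\lambda\geq 0$ the numbers $x$ and $\lambda y$ have the same sign and $|x+\lambda y|=|x|+\lambda|y|\geq|x|\geq\sqrt{1-\epsilon^2}\,|x|$. Thus $y\in x^{+(\epsilon)}$.

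For the necessity direction I would prove the contrapositive: if $xy<0$, then $x\neq 0$ and $y\neq 0$ have opposite signs, so the choice $\lambda_0=|x|/|y|>0$ gives $x+\lambda_0 y=0$, whence $|x+\lambda_0 y|=0<\sqrt{1-\epsilon^2}\,|x|$ because $x\neq 0$ and $\sqrt{1-\epsilon^2}>0$. Hence $y\notin x^{+(\epsilon)}$, which establishes the first equivalence. The statement about $x^{-(\epsilon)}$ then follows immediately by the substitution $\lambda\mapsto-\lambda$: namely $y\in x^{-(\epsilon)}$ if and only if $-y\in x^{+(\epsilon)}$, if and only if $x(-y)\geq 0$, that is $xy\leq 0$.

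There is no real obstacle here; the only place where the hypothesis $\epsilon<1$ (rather than merely $\epsilon\le 1$) is actually used is the strict positivity of $\sqrt{1-\epsilon^2}$, which is precisely what turns the vanishing of $x+\lambda_0 y$ into a genuine violation of the defining inequality.
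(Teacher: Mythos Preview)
Your proof is correct. The paper itself omits any proof of this proposition, introducing it merely as ``the following easy proposition'' and passing directly to its application, so there is nothing to compare against; your sign-based case analysis and the choice $\lambda_0=|x|/|y|$ in the contrapositive direction constitute exactly the routine verification the authors leave to the reader.
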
 
Now, the promised characterization:

\begin{theorem}\label{theorem:functional}
Let $\mathbb{X}$ be a normed linear space such that $\mathbb{X}^*$ is strictly convex. Then for any $f, g\in \mathbb{X}^*$, $f\bot_{B} g $ if and only if there exist $\{x_n\}$, $\{y_n\} $ in $ S_\mathbb{X}$ such that \\
$(i)|f(x_n)|\rightarrow \|f\|$ and $ |f(y_n)| \rightarrow \|f\|$ as $n \rightarrow \infty$ \\
$(ii)f(x_n).g(x_n) \geq 0$ and  $f(y_n).g(y_n) \leq 0$ for all $n\in \mathbb{N}$.
\end{theorem}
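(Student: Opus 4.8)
The plan is to deduce this from Theorem~\ref{theorem:rotund} applied to the space $\mathbb{B}(\mathbb{X},\mathbb{R}) = \mathbb{X}^*$, after a harmless normalization. First I would dispose of the trivial case $f = \theta$: then $f \perp_B g$ always holds, and both (i) and (ii) are satisfied vacuously by any choice of $\{x_n\}, \{y_n\} \subset S_\mathbb{X}$, since $|f(x_n)| \to 0 = \|f\|$ and $f(x_n)g(x_n) = f(y_n)g(y_n) = 0$. For $f \neq \theta$, Birkhoff-James orthogonality is homogeneous in the first slot, so $f \perp_B g$ if and only if $\frac{f}{\|f\|} \perp_B g$; moreover $|f(x_n)| \to \|f\|$ is equivalent to $|\frac{f}{\|f\|}(x_n)| \to 1 = \|\frac{f}{\|f\|}\|$, and $f(x_n)g(x_n) \geq 0$ is equivalent to $\frac{f}{\|f\|}(x_n)g(x_n) \geq 0$. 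Hence I may and do assume $\|f\| = 1$.

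The next step is to observe that every $f \in S_{\mathbb{X}^*}$ is a rotund point of $B_{\mathbb{X}^*}$ when $\mathbb{X}^*$ is strictly convex. Indeed, if $h \in S_{\mathbb{X}^*}$ satisfies $\|\frac{f+h}{2}\| = 1$ and $h \neq f$, then $f$ and $h$ are linearly independent: the only scalar multiples of $f$ lying on $S_{\mathbb{X}^*}$ are $\pm f$, and $h = -f$ would force $\|\frac{f+h}{2}\| = 0 \neq 1$. Strict convexity of $\mathbb{X}^*$ then gives $\|f+h\| < \|f\| + \|h\| = 2$, contradicting $\|\frac{f+h}{2}\| = 1$. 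Thus our $f$, viewed as an element of $\mathbb{B}(\mathbb{X},\mathbb{R})$, is a rotund point of $\mathbb{B}(\mathbb{X},\mathbb{R})$.

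Now I would invoke Theorem~\ref{theorem:rotund} with $\mathbb{Y} = \mathbb{R}$: $f \perp_B g$ if and only if there exist $\{x_n\}, \{y_n\} \subset S_\mathbb{X}$ and positive reals $\epsilon_n \to 0$, $\delta_n \to 0$ with $|f(x_n)| \to \|f\|$, $|f(y_n)| \to \|f\|$, $g(x_n) \in (f(x_n))^{+(\epsilon_n)}$ and $g(y_n) \in (f(y_n))^{-(\delta_n)}$ (note that $\|Tx\|$ here is simply $|f(x)|$). Finally, Proposition~\ref{prop:real} identifies $g(x_n) \in (f(x_n))^{+(\epsilon_n)}$ with $f(x_n)g(x_n) \geq 0$ and $g(y_n) \in (f(y_n))^{-(\delta_n)}$ with $f(y_n)g(y_n) \leq 0$, \emph{independently of} the values $\epsilon_n, \delta_n$. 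This yields (i) and (ii) from the conclusion of Theorem~\ref{theorem:rotund}. Conversely, given $\{x_n\}, \{y_n\}$ satisfying (i) and (ii), put $\epsilon_n = \delta_n = \frac{1}{n}$; Proposition~\ref{prop:real} then shows the hypotheses of Theorem~\ref{theorem:rotund} hold, whence $f \perp_B g$.

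I do not expect a genuine obstacle: the argument is essentially a translation of Theorem~\ref{theorem:rotund} through Proposition~\ref{prop:real}, whose only function is to absorb the auxiliary sequences $\{\epsilon_n\}, \{\delta_n\}$ that carry no information in the one-dimensional range. The two points that require a little care are the reduction to $\|f\| = 1$ and the passage from strict convexity of $\mathbb{X}^*$ to rotundity of every point of $S_{\mathbb{X}^*}$; both are routine, the latter being the only place where the hypothesis on $\mathbb{X}^*$ is used.
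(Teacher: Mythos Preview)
Your proposal is correct and follows essentially the same route as the paper's own proof: strict convexity of $\mathbb{X}^*$ makes $f$ a rotund point, so Theorem~\ref{theorem:rotund} applies with $\mathbb{Y}=\mathbb{R}$, and Proposition~\ref{prop:real} collapses the $\epsilon_n,\delta_n$ conditions into the sign conditions $f(x_n)g(x_n)\geq 0$, $f(y_n)g(y_n)\leq 0$. Your treatment of the case $f=\theta$ and the normalization to $\|f\|=1$ (needed because rotund points are by definition unit vectors) are details the paper leaves implicit, but the core argument is identical.
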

\begin{proof}
Since $ \mathbb{X}^{*} $ is strictly convex, $ f, g $ are rotund points of $ \mathbb{X}^{*}. $ The rest of the proof follows directly from Theorem ~\ref{theorem:rotund} and Proposition ~\ref{prop:real}. 
\end{proof}

Similarly as an application of Theorem \ref{theorem:bounded}, Birkhoff-James orthogonality of bounded linear functionals  on an infinite dimensional  normed linear space can be stated as follows.

\begin{theorem}\label{theorem:bounded functional}
Let $\mathbb{X}$ be a normed linear space. Then for any $f, g\in \mathbb{X}^*$, $f\bot_{B} g $ if and only if either of the conditions in (a) or in (b) holds:\\
(a) there exists $\{x_n\}$ in $ S_\mathbb{X}$ such that $\mid f(x_n)\mid \longrightarrow \|f\| $ and  $g(x_n) \longrightarrow 0.$\\
(b) there exists $\{x_n\}, \{y_n\}$ in $ S_\mathbb{X}$ such that \\
$(i)|f(x_n)| \rightarrow \|f\|$ and $ |f(y_n)| \rightarrow \|f\|$ as $n \rightarrow \infty$ \\
$(ii)f(x_n).g(x_n) \geq 0$ and  $f(y_n).g(y_n) \leq 0$ for all $n\in \mathbb{N}$.
\end{theorem}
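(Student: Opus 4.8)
The plan is to read this result off from Theorem~\ref{theorem:bounded} by specializing $\mathbb{Y} = \mathbb{R}$, so that $\mathbb{B}(\mathbb{X}, \mathbb{R}) = \mathbb{X}^*$, and then replacing the $\epsilon$-orthogonality relations appearing there by sign conditions via Proposition~\ref{prop:real}. The point to keep in mind is that for real scalars the relation $y \in x^{+(\epsilon)}$ is independent of $\epsilon$ --- it holds for every $\epsilon \in [0,1)$ exactly when $xy \geq 0$ --- which is precisely why the auxiliary null sequences $\{\epsilon_n\}, \{\delta_n\}$ of Theorem~\ref{theorem:bounded} do not survive in the present statement.

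For sufficiency I would argue directly rather than invoke Theorem~\ref{theorem:bounded}. If (a) holds then, for every $\lambda \in \mathbb{R}$ and every $n$, $\|f + \lambda g\| \geq |f(x_n) + \lambda g(x_n)| \geq |f(x_n)| - |\lambda|\,|g(x_n)| \to \|f\|$, so $f \perp_B g$. If (b) holds then, by Proposition~\ref{prop:real} applied with $\epsilon = 0$, the hypothesis $f(x_n)g(x_n) \geq 0$ gives $|f(x_n) + \lambda g(x_n)| \geq |f(x_n)|$ for all $\lambda \geq 0$, hence $\|f + \lambda g\| \geq |f(x_n)| \to \|f\|$ for $\lambda \geq 0$; symmetrically, $f(y_n)g(y_n) \leq 0$ yields $\|f + \lambda g\| \geq \|f\|$ for $\lambda \leq 0$. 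Thus $f \perp_B g$.

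For necessity, suppose $f \perp_B g$. The case $f = 0$ is trivial and falls under (b): fixing any $x_0 \in S_\mathbb{X}$ and taking $x_n = y_n = x_0$ for all $n$, the conditions (i) hold automatically and the sign conditions in (ii) reduce to $0 = 0$. If $f \neq 0$, apply Theorem~\ref{theorem:bounded} with $T = f$, $A = g$, $\mathbb{Y} = \mathbb{R}$. If alternative (a) of that theorem holds, then $|f(x_n)| = \|f x_n\| \to \|f\|$ and $|g(x_n)| = \|g x_n\| \to 0$, which is exactly (a) above. If alternative (b) holds, then, read as relations between the real numbers $f(x_n), g(x_n)$ (respectively $f(y_n), g(y_n)$), we have $g(x_n) \in (f(x_n))^{+(\epsilon_n)}$ and $g(y_n) \in (f(y_n))^{-(\delta_n)}$, so Proposition~\ref{prop:real} immediately gives $f(x_n)g(x_n) \geq 0$ and $f(y_n)g(y_n) \leq 0$ for all $n$; combined with conditions (i), (ii) of Theorem~\ref{theorem:bounded}, this is precisely (b) above.

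I do not anticipate any genuine difficulty here: the entire content is the specialization $\mathbb{Y} = \mathbb{R}$ together with Proposition~\ref{prop:real}. The only wrinkle is that Theorem~\ref{theorem:bounded} is stated for nonzero $T$, so the degenerate case $f = 0$ has to be disposed of separately as above (and one tacitly assumes $\mathbb{X} \neq \{\theta\}$, so that $S_\mathbb{X}$ is nonempty).
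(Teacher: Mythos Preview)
Your proposal is correct and follows exactly the approach the paper indicates: the paper states this theorem without proof, prefacing it only with ``Similarly as an application of Theorem~\ref{theorem:bounded},'' and your argument is precisely that specialization to $\mathbb{Y}=\mathbb{R}$ combined with Proposition~\ref{prop:real}. The only addition is your explicit handling of the degenerate case $f=0$, which the paper tacitly ignores.
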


\begin{remark}
If $ \mathbb{X} $ is reflexive then every bounded linear functional on $ \mathbb{X}, $ by virtue of being compact, attains norm. In that case, considering $ \mathbb{Y} = \mathbb{R} $ in Theorem ~\ref{theorem:compact} we get a complete description of the Birkhoff-James orthogonality of bounded linear functionals on $ \mathbb{X}. $ In fact, the following reformulation of Theorem ~\ref{theorem:compact} is worth mentioning in this context:

\begin{theorem}
Let $\mathbb{X}$ be a reflexive Banach space. Then for any $f, g \in \mathbb{X}^*$, $f\bot_B g$ if and only if there exists $ x, y \in M_f $ such that $ f(x).g(x) \geq 0 $ and $ f(y).g(y) \leq 0. $ In addition, if $ \mathbb{X} $ is strictly convex then  $f\bot_B g $ if and only if there exists $ x \in M_f $ such that $ f(x).g(x) = 0. $
\end{theorem}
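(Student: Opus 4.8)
The plan is to obtain this statement directly from Theorem~\ref{theorem:compact} by specializing the target space to $\mathbb{Y}=\mathbb{R}$. First I would note that every $f\in\mathbb{X}^{*}$ has one-dimensional range, hence is compact, so $f,g\in\mathbb{K}(\mathbb{X},\mathbb{R})$; moreover $\mathbb{X}$ is reflexive by hypothesis, so Theorem~\ref{theorem:compact} applies verbatim and tells us that $f\perp_{B}g$ if and only if there exist $x,y\in M_f$ with $g(x)\in(f(x))^{+}$ and $g(y)\in(f(y))^{-}$, the symbols $f(x),g(x)$ now being real numbers. Since the relations $z^{+(0)}$ and $z^{-(0)}$ coincide with $z^{+}$ and $z^{-}$ directly from the definitions, Proposition~\ref{prop:real} applied with $\epsilon=0$ rewrites these two conditions as $f(x)g(x)\geq 0$ and $f(y)g(y)\leq 0$. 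This settles the first assertion.

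For the addendum, assume in addition that $\mathbb{X}$ is strictly convex; one may assume $f\neq 0$, the case $f=0$ being trivial. The sufficiency is easy: if some $x\in M_f$ has $f(x)g(x)=0$, then $f(x)g(x)\geq 0$ and $f(x)g(x)\leq 0$, so choosing both distinguished vectors in the first assertion to be this $x$ yields $f\perp_{B}g$. For necessity, the crucial structural observation is that, in a strictly convex space, a norm-attaining nonzero functional $f$ has $M_f=\{x_0,-x_0\}$ for a single $x_0\in S_{\mathbb{X}}$: if $f(x_1)=f(x_2)=\|f\|$ with $x_1,x_2\in S_{\mathbb{X}}$, then $\|x_1+x_2\|\geq |f(x_1+x_2)|/\|f\|=2$, forcing $\|x_1+x_2\|=\|x_1\|+\|x_2\|$; strict convexity then makes $x_1,x_2$ linearly dependent, and since the antipodal possibility would give $\|x_1+x_2\|=0$, we conclude $x_1=x_2$. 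Applying this to $f$ and to $-f$ pins down $M_f$ as claimed (note that $f$ does attain its norm, by reflexivity together with compactness of $f$). It follows that the function $x\mapsto f(x)g(x)$ is constant on $M_f$, with common value $f(x_0)g(x_0)$. But the first assertion produces points of $M_f$ at which this function is $\geq 0$ and $\leq 0$ respectively; a constant satisfying both is $0$, so $f(x_0)g(x_0)=0$ and $x=x_0$ is the required vector.

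Given the machinery already in place, I expect no genuine difficulty; the only step that calls for a little care is the description of $M_f$ under strict convexity — specifically, deducing from $\|x_1+x_2\|=\|x_1\|+\|x_2\|$ that $x_1=x_2$ rather than merely $x_1=\pm x_2$ — after which the result is a short sign-chasing argument resting on Proposition~\ref{prop:real}.
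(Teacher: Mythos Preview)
Your proposal is correct and follows exactly the route the paper intends: the theorem is stated in the paper as a direct reformulation of Theorem~\ref{theorem:compact} with $\mathbb{Y}=\mathbb{R}$, translated via Proposition~\ref{prop:real}, and no further proof is given there. Your handling of the strictly convex addendum (showing $M_f=\{\pm x_0\}$ and observing that $x\mapsto f(x)g(x)$ is constant on $M_f$) supplies the details the paper leaves implicit, and is carried out correctly.
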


\end{remark}

\begin{remark}
However, if $ \mathbb{X} $  is not reflexive then the norm attaining set of a bounded linear functional may be possibly empty and Theorem ~\ref{theorem:compact} is no longer applicable. In fact, it is well known that if $ \mathbb{X} $ is not reflexive then there exists a bounded linear functional on $ \mathbb{X} $ such that the functional does not attain norm. It is particularly in these cases that Theorem ~\ref{theorem:functional} can be effectively applied, provided the dual space $ \mathbb{X}^{*} $ is strictly convex. We would also like to note that there exists a non-reflexive Banach space whose dual is strictly convex. Indeed, the classical $ c_0 $ space (with the \emph{sup} norm) has an unconditional basis and it does not contain a copy of $ l_1. $ Therefore, as pointed out in \cite{ST}, there exists an equivalent renorming of this space with a strictly convex dual norm. Furthermore, we note that if a Banach space is reflexive under a particular norm, then it is reflexive under any equivalent norm. Combining these observations, we may and do conclude that there exists a norm on $ c_0 $ such that the space is nonreflexive and   the dual space is strictly convex.
\end{remark}

The next two theorems show how Birkhoff-James orthogonality($ T \bot_B A$) and strong Birkhoff-James orthogonality($T \bot_{SB}A$) are related in the space of bounded linear operators. A scalar $\lambda $ is said to belong to the approximate point spectrum of $A$, written as $\sigma_{app} (A),$ if there exists a sequence $\{x_n\} $ of unit vectors such that $ \|(A-\lambda I)x_n \| \longrightarrow 0.$ In the following theorem we show that if $0 \notin \sigma_{app} (A),$ then the notions $T \bot_B A$ and $T \bot_{SB} A$ are equivalent. 

\begin{theorem}\label{theorem:hilbert}
Let $\mathbb{H}$ be a Hilbert space, $T,A \in \mathbb{B}(\mathbb{H})$  and $0 \notin \sigma_{app} (A)$. Then $T \bot_B A$ and $T \bot_{SB} A$ are equivalent. 
\end{theorem}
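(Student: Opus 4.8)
The plan is to combine the Bhatia--\v{S}emrl characterization of Birkhoff--James orthogonality in Hilbert spaces (quoted in the introduction) with the elementary fact that $0 \notin \sigma_{app}(A)$ is equivalent to $A$ being bounded below, i.e. there exists $c>0$ with $\|Ax\| \geq c\|x\|$ for all $x \in \mathbb{H}$. One implication is free of all hypotheses: $T \perp_{SB} A$ always implies $T \perp_B A$, directly from the definitions, so only the converse requires work.

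For the converse, I would argue by contradiction. Suppose $T \perp_B A$ but $T \not\perp_{SB} A$. The failure of strong orthogonality yields a nonzero scalar $\lambda_0$ with $\|T + \lambda_0 A\| \leq \|T\|$; combining this with $T \perp_B A$, which forces $\|T + \lambda_0 A\| \geq \|T\|$, we obtain $\|T + \lambda_0 A\| = \|T\|$. Now invoke Bhatia--\v{S}emrl to get a sequence $\{x_n\} \subset S_{\mathbb{H}}$ with $\|Tx_n\| \to \|T\|$ and $\langle Tx_n, Ax_n\rangle \to 0$. Since $0 \notin \sigma_{app}(A)$, fix $c>0$ with $\|Ax_n\| \geq c$ for every $n$. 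Expanding the norm in the inner product,
\[
\|(T + \lambda_0 A)x_n\|^2 = \|Tx_n\|^2 + 2\lambda_0\,\langle Tx_n, Ax_n\rangle + \lambda_0^2\,\|Ax_n\|^2 \;\geq\; \|Tx_n\|^2 + 2\lambda_0\,\langle Tx_n, Ax_n\rangle + \lambda_0^2 c^2 .
\]
Letting $n \to \infty$, the right-hand side tends to a limit that is at least $\|T\|^2 + \lambda_0^2 c^2 > \|T\|^2$, whereas $\|(T + \lambda_0 A)x_n\|^2 \leq \|T + \lambda_0 A\|^2 = \|T\|^2$ for every $n$. This contradiction completes the argument.

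I do not expect a genuine obstacle here; the proof is short once the right tools are assembled. The only points needing a little care are the standard equivalence ``$0 \notin \sigma_{app}(A) \iff A$ is bounded below'' (which I would state and use without detailed proof) and the degenerate case $T = 0$: there $T \perp_B A$ holds automatically, the assumption $\|T + \lambda_0 A\| = \|T\| = 0$ forces $\lambda_0 A = 0$ and hence $A = 0$, which contradicts $0 \notin \sigma_{app}(A)$, so the statement holds trivially in that case.
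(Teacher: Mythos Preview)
Your argument is correct and follows essentially the same strategy as the paper: assume $T\perp_B A$ but $\|T+\lambda_0 A\|=\|T\|$ for some $\lambda_0\neq 0$, invoke the Bhatia--\v{S}emrl/Paul characterization, expand $\|\cdot\|^2$ via the inner product, and use that $A$ is bounded below to force a contradiction. The one difference is where the characterization is applied. The paper first observes that $(T-\lambda_0 A)\perp_B A$ and applies Bhatia--\v{S}emrl to $T-\lambda_0 A$, obtaining $\langle (T-\lambda_0 A)x_n,Ax_n\rangle\to 0$; substituting this into the expansion yields $\|(T-\lambda_0 A)x_n\|^2 \to \|Tx_n\|^2-\lambda_0^2\|Ax_n\|^2$, which is eventually strictly below $\|T\|^2$. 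You apply Bhatia--\v{S}emrl directly to $T$, obtaining $\langle Tx_n,Ax_n\rangle\to 0$, and then show $\|(T+\lambda_0 A)x_n\|^2$ is eventually strictly above $\|T\|^2$. Your route is marginally more direct since it skips the auxiliary orthogonality $(T-\lambda_0 A)\perp_B A$; otherwise the two arguments are the same in substance.
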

\begin{proof}
Clearly $T \bot_{SB} A $ implies $T \bot_{B} A.$ On the other hand, if possible suppose that $T \bot_{B} A $ but $T\not \perp_{SB} A$. Then there exists a nonzero scalar $\lambda_0 \in \mathbb{R}$ such that $\|T\|= \|T- \lambda_0 A\|$. Now,  for all $\lambda \in \mathbb{R}$, $\|T- \lambda_0 A\| \leq \|T + \lambda A\|= \|(T- \lambda_0 A)+ (\lambda + \lambda_0) A\|$. Hence $(T- \lambda_0 A) \bot_B A$. Then from Paul \cite{P}, it follows that there exists $\{x_n\}\subseteq S_{\mathbb{H}}$ such that $\|(T-\lambda_0 A)x_n\| \rightarrow \|T- \lambda_0A\|$ and $\langle (T- \lambda_0 A)x_n, Ax_n \rangle \rightarrow 0$. Now, using the fact that $\|Ax_n\|\not\rightarrow 0$ (since $ 0 \notin \sigma_{app} (A)$) we have, $\|T- \lambda_0 A\|^2 = lim_{n \rightarrow \infty} \|(T- \lambda_0 A)x_n\|^2= lim_{n \rightarrow \infty} \{\|Tx_n\|^2 + |\lambda_0|^2 \|Ax_n\|^2 - 2 \lambda_0 \langle Tx_n, Ax_n \rangle \}= lim_{n \rightarrow \infty} \{\|Tx_n\|^2 - |\lambda_0|^2 \|Ax_n\|^2\} < lim_{n \rightarrow \infty} \|Tx_n\|^2 \leq \|T\|^2$. Therefore, $\|T- \lambda_0 A\|< \|T\|$, a contradiction. This proves the theorem.  
\end{proof}
 The result obtained in the previous theorem can be improved on to show that the notions $T \bot_B A$ and $T \bot_{SB} A$ are equivalent where $T,A \in \mathbb{B}(\mathbb{X}, \mathbb{Y}),$ if $0 \notin \sigma_{app} (A)$ and $\mathbb{Y}$ is a uniformly convex space. 
\begin{theorem}\label{theorem:uniform}
Let $\mathbb{X}$ be a normed linear space, $\mathbb{Y}$ be a uniformly convex space and $T,A \in \mathbb{B}(\mathbb{X}, \mathbb{Y})$.  If $0 \notin \sigma_{app} (A)$ then  $T\bot_B A $ and  $T\bot_{SB} A$ are equivalent.
\end{theorem}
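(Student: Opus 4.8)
The plan is to keep the overall structure of the proof of Theorem~\ref{theorem:hilbert}, but to replace the Hilbert-space computation there (which relied on the parallelogram law together with Paul's characterization) by a combination of Theorem~\ref{theorem:bounded} with the defining inequality of uniform convexity. Only the implication $T\bot_B A\Rightarrow T\bot_{SB}A$ requires an argument, since the reverse is immediate from the definitions.

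So suppose, towards a contradiction, that $T\bot_B A$ while $T\not\bot_{SB}A$. The case $T=0$ is trivial (then $T\bot_{SB}A$ just says $A\ne 0$, which holds because $0\notin\sigma_{app}(A)$), so we may assume $T\ne 0$. From $T\not\bot_{SB}A$ we obtain some $\lambda_0\ne 0$ with $\|T+\lambda_0A\|\le\|T\|$, and then $T\bot_B A$ forces $\|T+\lambda_0A\|=\|T\|$. Replacing $A$ by $-A$ if necessary — which preserves $T\bot_B A$, $T\not\bot_{SB}A$ and $0\notin\sigma_{app}(A)$, while sending the witnessing scalar to $-\lambda_0$ — we may assume $\lambda_0>0$. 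Finally, since $0\notin\sigma_{app}(A)$, the quantity $c:=\inf_{x\in S_\mathbb{X}}\|Ax\|$ is strictly positive.

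Next I would feed $T\bot_B A$ into Theorem~\ref{theorem:bounded}. Alternative $(a)$ there would supply unit vectors $x_n$ with $\|Ax_n\|\to 0$, contradicting $c>0$; hence alternative $(b)$ holds. In particular there exist $\{x_n\}\subset S_\mathbb{X}$ and positive scalars $\epsilon_n\to 0$ with $\|Tx_n\|\to\|T\|$ and $Ax_n\in(Tx_n)^{+(\epsilon_n)}$, i.e. $\|Tx_n+\mu Ax_n\|\ge\sqrt{1-\epsilon_n^2}\,\|Tx_n\|$ for all $\mu\ge 0$. Taking $\mu=\lambda_0$ and using $\|(T+\lambda_0A)x_n\|\le\|T+\lambda_0A\|=\|T\|$ gives $\|(T+\lambda_0A)x_n\|\to\|T\|$; taking $\mu=\tfrac{\lambda_0}{2}$ gives $\liminf_n\|(T+\tfrac{\lambda_0}{2}A)x_n\|\ge\|T\|$.

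The contradiction then comes from uniform convexity of $\mathbb{Y}$: the vectors $Tx_n$ and $(T+\lambda_0A)x_n$ both have norm tending to $\|T\|>0$, yet their difference $\lambda_0Ax_n$ has norm at least $\lambda_0c>0$. Normalising both vectors, applying the definition of uniform convexity to the normalised pair, and transferring the estimate back (using $\|Tx_n\|,\|(T+\lambda_0A)x_n\|\to\|T\|$) yields $\limsup_n\|\tfrac12(Tx_n+(T+\lambda_0A)x_n)\|<\|T\|$. But that midpoint is exactly $(T+\tfrac{\lambda_0}{2}A)x_n$, whose norm has $\liminf\ge\|T\|$ by the previous step — a contradiction, which finishes the proof. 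I expect the only non-routine ingredient to be this last quantitative use of uniform convexity, i.e. the elementary ``midpoint lemma'': if $\|u_n\|,\|v_n\|\to r>0$ and $\inf_n\|u_n-v_n\|>0$, then $\limsup_n\|\tfrac12(u_n+v_n)\|<r$. Its proof requires a short estimate comparing $u_n,v_n$ with their normalisations, but is otherwise standard; everything else in the argument is bookkeeping around Theorem~\ref{theorem:bounded}.
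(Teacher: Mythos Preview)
Your proposal is correct and follows essentially the same route as the paper's proof: both feed $T\bot_B A$ into Theorem~\ref{theorem:bounded}, extract a sequence $\{x_n\}$ with $\|Tx_n\|\to\|T\|$ and $Ax_n\in(Tx_n)^{+(\epsilon_n)}$, observe that this forces $\|(T+\lambda A)x_n\|\to\|T\|$ for the relevant $\lambda\in[0,\lambda_0]$, and then derive a contradiction from uniform convexity applied to two such vectors whose difference $(\lambda_1-\lambda_2)Ax_n$ is bounded below in norm. The only cosmetic differences are that the paper normalises $\|T\|=1$ (so the vectors lie in $B_{\mathbb{Y}}$ and uniform convexity applies directly, avoiding your midpoint lemma) and works with a generic pair $\lambda_1,\lambda_2\in[0,\lambda_0]$ rather than the specific endpoints $0$ and $\lambda_0$; conversely, your explicit elimination of alternative~$(a)$ in Theorem~\ref{theorem:bounded} is cleaner than the paper's implicit handling of it.
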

\begin{proof}
Clearly $T \bot_{SB} A $ implies $T \bot_{B} A.$ On the other hand, if possible suppose that $T \bot_{B} A $ but $T\not \perp_{SB} A$.  Without loss of generality, assume that $\|T\|=1$.  Then from Theorem \ref{theorem:bounded}, it follows that  there exists sequences $\{x_n\}$, $\{y_n\} \subseteq S_{\mathbb{X}}$ and $\{\epsilon_n\}, ~\{\delta_n\}\subseteq [0,1)$ such that $\|Tx_n\|\rightarrow \|T\|$, $\|Ty_n\| \rightarrow \|T\|$ and $Ax_n \in (Tx_n)^{+(\epsilon_n)}$, $Ay_n \in (Ty_n)^{-(\delta_n)}$ and $\epsilon_n \rightarrow 0$, $\delta_n \rightarrow 0$.\\
 Then there exists a non-zero scalar $\lambda_0$ such that $\|T+\lambda_0 A\|=\|T\|$. Without loss of generality, assume that $\lambda_0 >0$. Then for any $\lambda \in (0,\lambda_0)$, $T+\lambda A= (1-\frac{\lambda}{\lambda_0}) T+ \frac{\lambda}{\lambda_0}(T+\lambda_0 A)$. Thus $\|T+\lambda A\|\leq (1-\frac{\lambda}{\lambda_0}) \|T\|+ \frac{\lambda}{\lambda_0}\|(T+\lambda_0 A)\|=(1-\frac{\lambda}{\lambda_0}) \|T\|+ \frac{\lambda}{\lambda_0}\|T\|=\|T\|\leq \|T+\lambda A\|$ since $T\bot_B A$. Therefore, $\|T+\lambda A\|=\|T\|$ for all $0\leq \lambda \leq \lambda_0 $. Thus for all $\lambda \in [0, \lambda_0],$ we have $\|T\|= \|T+\lambda A\|\geq \|(T + \lambda A) x_n\| \geq \sqrt{1-{\epsilon_n}^2}\|Tx_n\| \rightarrow \|T\|$. Hence  
 \begin{eqnarray}
 \|(T+ \lambda A)x_n\| \rightarrow \|T\|=1 ~ \forall ~\lambda \in [0, \lambda_0] 
 \end{eqnarray}
Now, suppose that $\inf \|Ax_n\|= c > 0,$ since $0 \notin \sigma_{app}(A).$ Take $\lambda_1, \lambda_2 \in [0,\lambda_0]$ such that $\lambda_1 \neq \lambda_2 $ and $|\lambda_1-\lambda_2|\leq \frac{2}{c}$. Now, choose $0< \epsilon < c|\lambda_1- \lambda_2|$. Then $\|(Tx_n + \lambda_1 Ax_n)- (Tx_n +\lambda_2 Ax_n)\| = |\lambda_1 - \lambda_2| \|Ax_n\| \geq |\lambda_1 - \lambda_2|c > \epsilon$. Again $\|Tx_n + \lambda_i Ax_n\|\leq \|T\|=1 \Rightarrow Tx_n +\lambda_i Ax_n \in B(\mathbb{Y})$ for $i=1,2$. Since $\mathbb{Y}$ is uniformly convex, clearly $\|\frac{Tx_n +\lambda_1 Ax_n}{2} + \frac{Tx_n + \lambda_2 Ax_n}{2}\|\leq 1- \delta_{\mathbb{Y}}(\epsilon),$ where $\delta_{\mathbb{Y}}(\epsilon) > 0 $ is the modulus of convexity of $\mathbb{Y}$. This implies that $\|Tx_n + \frac{\lambda_1+ \lambda_2}{2} Ax_n\|\leq 1-\delta_{\mathbb{Y}}(\epsilon)$. Thus $lim_{n \rightarrow \infty} \|Tx_n + \frac{\lambda_1+ \lambda_2}{2} Ax_n\| \leq 1-\delta_{\mathbb{Y}}(\epsilon) < 1,$ which contradicts (1). This completes the proof. 
\end{proof}
\begin{remark}
From Theorem \ref{theorem:rotund} and Example \ref{example:not rotund}, it follows that if $0 \in \sigma_{app}(A)$ then $ T \bot_B A$ may not imply $ T \bot_{SB}A.$
\end{remark}

\section{Smoothness of bounded linear operator}
To characterize smoothness of a bounded linear operator in the space of bounded linear operators is one of the most complicated and sought after area of study in the geometry of Banach spaces. Mathematicians including  Holub \cite{HO}, Heinrich \cite{HR}, Hennefeld \cite{HF}, Abatzoglou \cite{A}, Kittaneh and Younis \cite{KY}, Rao \cite{R}  have studied smoothness over the years, yet a  complete characterization of smoothness remain elusive. In \cite{PSG} a sufficient condition for the smoothness has been obtained for the first time. Extending the work done in \cite{PSG}, we here obtain separate  necessary and sufficient conditions for smoothness of a bounded linear operator.  We begin with the following theorem.

\begin{theorem}\label{theorem:ortho}
Let $\mathbb{X}, \mathbb{Y}$ be  normed linear spaces. Let $T \in \mathbb{B} (\mathbb{X}, \mathbb{Y})$ and $M_T = \{\pm x_0\}$ for some $x_0 \in S_ \mathbb{X}$ and $T$ further satisfies the following property:\\
Given any $\delta>0,$ if $\{H_\alpha : \alpha \in \Lambda\}$ is the collection of all hyperspaces such that $d( x_0, H_\alpha) > \delta$ then  $\sup\{\|Tx\| : x \in (\cup_{\alpha}H_\alpha) \cap S_\mathbb{X} \} < \|T\|$. Then for any $A \in \mathbb{B} (\mathbb{X}, \mathbb{Y})$, $T \bot_B A$ if and only if $Tx_0 \bot_B Ax_0$.
\end{theorem}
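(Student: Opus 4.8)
The plan is to prove both implications, with the nontrivial direction being that $T \perp_B A$ forces $Tx_0 \perp_B Ax_0$. The converse is immediate: if $Tx_0 \perp_B Ax_0$, then for every scalar $\lambda$ we have $\|T + \lambda A\| \geq \|(T+\lambda A)x_0\| = \|Tx_0 + \lambda Ax_0\| \geq \|Tx_0\| = \|T\|$, so $T \perp_B A$. Here we use $M_T = \{\pm x_0\}$ only to know $\|Tx_0\| = \|T\|$.

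For the forward direction, suppose $T \perp_B A$. Since $M_T$ is a pair of antipodal points (in particular, $T$ attains its norm and is nonzero), I would like to apply Theorem~\ref{theorem:bounded}. If alternative $(a)$ holds, there is a sequence $\{x_n\} \subset S_\mathbb{X}$ with $\|Tx_n\| \to \|T\|$ and $\|Ax_n\| \to 0$. The extra geometric hypothesis on $T$ then forces $\{x_n\}$ to ``cluster'' near $\pm x_0$: indeed if some subsequence stayed a fixed distance $\delta$ away from the line $\mathbb{R}x_0$, then each such $x_n$ would lie in a hyperspace $H_\alpha$ with $d(x_0, H_\alpha) > \delta'$ for a suitable $\delta' > 0$, contradicting $\sup\{\|Tx\| : x \in (\cup_\alpha H_\alpha)\cap S_\mathbb{X}\} < \|T\|$ together with $\|Tx_n\| \to \|T\|$. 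The main technical point here is to convert ``$x_n$ is far from $\pm x_0$ on the sphere'' into ``$x_n$ lies in a hyperspace uniformly far from $x_0$''; one writes $x_n = t_n x_0 + w_n$ with $w_n$ in a fixed complement (or argues via a supporting functional at $x_0$) and checks that when $x_n$ is bounded away from $\pm x_0$, the hyperspace through $x_n$ parallel to an appropriate direction is at distance bounded below from $x_0$. Passing to a subsequence, $x_n \to \pm x_0$ in the relevant sense, and then $\|Ax_n\| \to 0$ yields $\|Ax_0\| = 0$ by continuity; hence trivially $Tx_0 \perp_B Ax_0$.

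If instead alternative $(b)$ holds, we get sequences $\{x_n\}, \{y_n\} \subset S_\mathbb{X}$, positive nulls $\{\epsilon_n\}, \{\delta_n\}$, with $\|Tx_n\| \to \|T\|$, $\|Ty_n\| \to \|T\|$, $Ax_n \in (Tx_n)^{+(\epsilon_n)}$ and $Ay_n \in (Ty_n)^{-(\delta_n)}$. Again the geometric hypothesis forces $x_n$ and $y_n$ to accumulate only at $\pm x_0$: the same hyperspace argument as above applies since $\|Tx_n\|, \|Ty_n\| \to \|T\|$. So (along a subsequence) $x_n \to \eta x_0$ and $y_n \to \eta' x_0$ with $\eta, \eta' \in \{\pm 1\}$. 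By continuity of $T$ and $A$, $Tx_n \to \eta Tx_0$, $Ax_n \to \eta Ax_0$, and the inequality $\|Tx_n + \lambda Ax_n\| \geq \sqrt{1 - \epsilon_n^2}\,\|Tx_n\|$ for $\lambda \geq 0$ passes to the limit to give $\|\eta Tx_0 + \lambda \eta Ax_0\| \geq \|\eta Tx_0\|$, i.e. $\|Tx_0 + \lambda Ax_0\| \geq \|Tx_0\|$ for all $\lambda \geq 0$, so $Ax_0 \in (Tx_0)^+$. Symmetrically, from the $\{y_n\}$ side we get $Ax_0 \in (Tx_0)^-$ (the sign $\eta'$ being absorbed just as before). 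Combining $Ax_0 \in (Tx_0)^+ \cap (Tx_0)^-$ gives exactly $Tx_0 \perp_B Ax_0$.

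The main obstacle, as flagged above, is the passage from the abstract hypothesis about hyperspaces far from $x_0$ to the concrete conclusion that norm-nearly-attaining sequences must converge to $\pm x_0$; once that ``localization'' lemma is in hand, everything else is a routine limiting argument using Theorem~\ref{theorem:bounded} and the definitions of $(Tx_0)^{\pm}$ and $(Tx_0)^{\pm(\epsilon)}$. A secondary subtlety is dealing with the two possible signs $\pm x_0$ and making sure one does not need $x_n$ and $y_n$ to converge to the \emph{same} one of $\pm x_0$ — but since $(Tx_0)^+ = (-Tx_0)^+$ fails in general, one should check instead that $Ax_0 \in (Tx_0)^+$ is equivalent to $A(-x_0) \in (T(-x_0))^+$, which it is, so the sign genuinely does not matter.
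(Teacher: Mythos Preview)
Your strategy is sound and genuinely different from the paper's. The paper does \emph{not} invoke Theorem~\ref{theorem:bounded}; instead it argues directly by contradiction: assuming $Tx_0\not\perp_B Ax_0$, it finds $\lambda_0$ with $\|Tx_0+\lambda_0 Ax_0\|<\|T\|$, uses continuity to get the same strict inequality on a ball $B(x_0,r_0)$, and then shows that the complement $C=S_{\mathbb X}\setminus(B(x_0,r_0)\cup B(-x_0,r_0))$ is covered by hyperspaces at uniform distance $>\delta$ from $x_0$, so the hypothesis gives $\sup_{C}\|Tz\|<\|T\|$. A small enough $\tilde\lambda$ then makes $\|T+\tilde\lambda A\|<\|T\|$, contradicting $T\perp_B A$. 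Your route, by contrast, first passes through the structure theorem for $T\perp_B A$, then proves a \emph{localization lemma} (any sequence with $\|Tx_n\|\to\|T\|$ must satisfy $\min(\|x_n-x_0\|,\|x_n+x_0\|)\to 0$), and finally takes limits in the $(\cdot)^{\pm(\epsilon_n)}$ inequalities. Your approach is more conceptual in that it isolates exactly what the hyperspace hypothesis buys; the paper's is self-contained and avoids the sequential machinery.

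Both proofs, however, hinge on the same technical core, and here your sketch is incomplete. Your two proposed mechanisms for the localization step do not work as stated: a \emph{fixed} complement or the kernel of a supporting functional at $x_0$ gives a \emph{single} hyperspace, which generally will not contain the offending $x_n$. What is needed is, for each $z\in S_{\mathbb X}$ with $\|z\pm x_0\|\ge r_0$, a hyperspace $H_z\ni z$ with $d(x_0,H_z)$ bounded below by some $\delta(r_0)>0$. The paper obtains this by choosing $\beta_z$ with $(x_0+\beta_z z)\perp_B z$ (James), defining $f_z(ax_0+bz)=a$ on $\mathrm{span}\{x_0,z\}$, extending by Hahn--Banach, and taking $H_z=\ker f$; then $d(x_0,H_z)\ge c_z:=\inf_\lambda\|x_0+\lambda z\|$, and a short compactness-free argument shows $\inf_{z\in C}c_z>0$. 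Once you plug this construction in, your limiting argument goes through verbatim (and your remark that the sign $\eta\in\{\pm1\}$ is harmless is correct, since $\|\eta Tx_0+\lambda\,\eta Ax_0\|=\|Tx_0+\lambda Ax_0\|$).
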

\begin{proof}
If $Tx_0 \bot_B Ax_0$ then clearly $T \bot_B A$, since $x_0 \in M_T$. For the other part, suppose that $T \bot_B A$ but $Tx_0 \not\perp_B Ax_0$. Then there exists a scalar $\lambda_0 \neq 0$ such that $\|Tx_0+ \lambda_0 Ax_0\|< \|T\|$. Then we can find an $\epsilon_1 >0$ such that \\
$$\|Tx_0+ \lambda_0 Ax_0\|< \|T\|- \epsilon_1.$$ 
Without loss of generality, we assume that $\lambda_0 > 0$. By continuity of the function $T+ \lambda_0 A$ at the point $x_0,$ we can find an open ball $B(x_0, r_0)$ such that \\
$$\|Tx+ \lambda_0 Ax\|< \|T\|- \epsilon_1 ~\forall x\in B(x_0, r_0).$$ \\
Let $\lambda \in (0, \lambda_0)$. Then for all $z\in B(x_0, r_0)\cap S_\mathbb{X}$,
    \[ Tz+ \lambda Az = (1-\frac{\lambda}{\lambda_0})Tz+ \frac{\lambda}{\lambda_0}(Tz+ \lambda_0 Az) \]
		\[\Rightarrow \|Tz+ \lambda Az\| \leq  (1-\frac{\lambda}{\lambda_0})\|Tz\|+ \frac{\lambda}{\lambda_0}\|(Tz+ \lambda_0 Az)\| \]
		\[\Rightarrow  \|Tz+ \lambda Az\| <  (1-\frac{\lambda}{\lambda_0})\|T\|+ \frac{\lambda}{\lambda_0}(\|T\|-\epsilon_1) =\|T\|-\frac{\lambda}{\lambda_0} \epsilon_1 \]
Since $\|(T+ \lambda A)z\|=\|(T+ \lambda A)(-z)\|$, we get for all $z\in (B(x_0, r_0) \cup B(-x_0, r_0))\cap S_\mathbb{X},$ 
\[ \|Tz+ \lambda Az\|< \|T\|- \frac{\lambda}{\lambda_0} \epsilon_1, \forall \lambda \in (0,\lambda_0). \]  
Consider $C=S_ \mathbb{X} \setminus( B(x_0, r_0) \cup B(-x_0, r_0))$. For any $z \in C$, $x_0$ and $z$ being linearly independent, from Theorem 2.3 of \cite{J}, there exists $\beta_z \in \mathbb{R}$ such that $x_0+ \beta _z z \bot_B z$. Therefore $\|x_0+ \lambda z\| \geq \|x_0+ \beta_z z\|= c_z $(say), for all $\lambda \in \mathbb{R}$. Clearly $c_z > 0$ as $\{x_0, z\}  $ is linearly independent. Define a function $f_z : $ Span $\{x_0, z\} \rightarrow \mathbb{R}$ by $f_z(a x_0 + b z)= a$. Clearly $f_z$ is a bounded linear functional on Span$\{x_0, z\}$ and so there exists a norm preserving extension  say $f$ of $f_z$ on $\mathbb{X}$. Clearly $\|f\|= \|f_z\| \leq \frac{1}{c_z}$. Let $H_z= ker (f)$. Then for all $y \in H_z$, $1=|f(x_0 -y)|\leq \|f\|\|  x_0 -y\|\leq \frac{1}{c_z}\| x_0 -y\| $. So $\| x_0-y\|\geq c_z$. Therefore $d( x_0, H_z)\geq c_z$. Thus for each $z \in C,$ there exists a hyperspace  $H_z$  of $\mathbb{X}$ containing $z$ such that $d(x_0, H_z)\geq c_z.$ So $C \subset (\cup_{z\in C}H_z) \cap S_\mathbb{X}.$ It is easy to check that there exists a $\delta>0$ such that $d(x_0, H_z)>\delta$ for all $z\in C$. \\

Now, by the hypothesis $\sup\{\|Tx\|: x \in (\cup_{z\in C}H_z) \cap S_\mathbb{X}\}< \|T\|$ and so $\sup\{\|Tx\|: x\in C\}< \|T\|$. Hence there exists $\epsilon_2 >0$ such that $\sup\{\|Tx\|: x\in C \}< \|T\|-\epsilon_2$. \\
Choose $ 0 < \widetilde{\lambda} < \min\{\lambda_0, \frac{\epsilon_2}{2\|A\|}\}$. Then for all $ z \in C ,$ we get 
\begin{eqnarray*}
\|Tz+\widetilde{\lambda} Az\| &\leq & \|Tz\|+|\widetilde{\lambda}|\|Az\| \\
                              & <  & \|T\|-\epsilon_2 +|\widetilde{\lambda}|\|A\|\\
															& < & \|T\| -\frac{1}{2}\epsilon_2
\end{eqnarray*}

Choose $\epsilon = \min \{\frac{1}{2}\epsilon_2, \frac{\widetilde{\lambda}}{\lambda_0} \epsilon_1\}$. Then for all $x \in S_{\mathbb{X}} $ we get, $$\|Tx+\widetilde{\lambda} Ax\| < \|T\|-\epsilon.$$
This shows that $ \|T+\widetilde{\lambda} A\| < \|T\|$, which contradicts the fact that $ T \perp_B A.$ 
This completes the proof. 
\end{proof}
We next prove a sufficient condition for the smoothness of a bounded linear operator.
\begin{theorem}\label{theorem:smooth suff}
Let $\mathbb{X}, \mathbb{Y}$ be  normed linear spaces. Let $T \in \mathbb{B} (\mathbb{X}, \mathbb{Y})$. Suppose the following conditions hold: \\
(i) $ M_T= \{\pm x_0\},$ for some $x_0\in S_\mathbb{X}$.\\
(ii) $ Tx_0$ is a smooth point of $\mathbb{Y}$. \\
(iii) Given any $\delta>0,$ if  $\{H_\alpha: \alpha \in \Lambda\}$ is the collection of hyperspaces such that $d( x_0, H_\alpha)> \delta$ then $ \sup\{\|Tx\| : x \in (\cup_{\alpha} H_\alpha) \cap  S_\mathbb{X} \} < \|T\|$.\\
Then $T$ is smooth.
\end{theorem}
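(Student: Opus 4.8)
The goal is to show that, under hypotheses (i)--(iii), $T$ is a smooth point of $\mathbb{B}(\mathbb{X},\mathbb{Y})$. I would use James' characterization of smoothness quoted in the introduction: it suffices to show that whenever $T \perp_B A$ and $T \perp_B B$ for $A, B \in \mathbb{B}(\mathbb{X},\mathbb{Y})$, we have $T \perp_B (A+B)$. The strategy is to transfer this additivity question from operators to the single vector $Tx_0 \in \mathbb{Y}$, where smoothness is available by hypothesis (ii).

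\textbf{Step 1: Verify that $T$ satisfies the hypothesis of Theorem~\ref{theorem:ortho}.} This is immediate: hypotheses (i) and (iii) here are exactly the two assumptions in Theorem~\ref{theorem:ortho}. Consequently, for \emph{every} $A \in \mathbb{B}(\mathbb{X},\mathbb{Y})$, we have the equivalence $T \perp_B A \iff Tx_0 \perp_B Ax_0$.

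\textbf{Step 2: Reduce operator-orthogonality-additivity to vector-orthogonality-additivity.} Suppose $T \perp_B A$ and $T \perp_B B$. By Step 1, $Tx_0 \perp_B Ax_0$ and $Tx_0 \perp_B Bx_0$ in $\mathbb{Y}$. Now invoke hypothesis (ii): since $Tx_0$ is a smooth point of $\mathbb{Y}$, James' criterion applied \emph{in $\mathbb{Y}$} gives $Tx_0 \perp_B (Ax_0 + Bx_0) = (A+B)x_0$. Applying the equivalence of Step 1 once more, this time in the direction $Tx_0 \perp_B (A+B)x_0 \Rightarrow T \perp_B (A+B)$, we conclude $T \perp_B (A+B)$. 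Since $A, B$ were arbitrary, James' criterion in $\mathbb{B}(\mathbb{X},\mathbb{Y})$ yields that $T$ is smooth.

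\textbf{Main obstacle.} The proof is essentially a two-line deduction once Theorem~\ref{theorem:ortho} is in hand, so there is no serious analytic obstacle here --- all the real work was done in establishing Theorem~\ref{theorem:ortho}. The only point requiring a moment's care is the logical bookkeeping: one must use the equivalence in Theorem~\ref{theorem:ortho} three times (twice forward to pass from $T$ to $Tx_0$, once backward to return), and one must make sure the application of James' smoothness criterion is legitimately being applied to the smooth point $Tx_0$ in $\mathbb{Y}$ rather than to $T$ in the operator space. It may also be worth remarking explicitly that $T \neq 0$ (so that $x_0$ and $\|T\|$ are meaningful), which follows from $M_T \neq \emptyset$.
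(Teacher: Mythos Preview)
Your proposal is correct and follows exactly the same approach as the paper: both arguments invoke Theorem~\ref{theorem:ortho} to transfer the orthogonality relations $T \perp_B A_i$ down to $Tx_0 \perp_B A_i x_0$, use the smoothness of $Tx_0$ in $\mathbb{Y}$ to deduce $Tx_0 \perp_B (A_1+A_2)x_0$, pass back up to $T \perp_B (A_1+A_2)$, and conclude via James' additivity criterion. The only cosmetic difference is that the paper names the operators $A_1, A_2$ rather than $A, B$.
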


\begin{proof}
Let $T\bot_B A_i(i=1,~2)$ with $A_i \in \mathbb{B}(\mathbb{X},\mathbb{Y})$. Then by Theorem \ref{theorem:ortho}, $Tx_0 \bot_B A_i x_0(i=1,~2)$. As $Tx_0$ is a smooth point of $\mathbb{Y}$ so $Tx_0 \bot_B (A_1+A_2)x_0$. Therefore, $T\bot_B (A_1+A_2)$. Then from Theorem 5.1 of James \cite{J}, it follows that  $T$ is smooth.
\end{proof}
On the other hand,  we prove that the following conditions are necessary for smoothness of a bounded linear operator $T.$

\begin{theorem}\label{theorem:smooth nec}
Let $\mathbb{X}, \mathbb{Y}$ be  normed linear spaces. Let $T \in \mathbb{B} (\mathbb{X}, \mathbb{Y})$ be a smooth point in the space of bounded linear  operators such that $M_T \neq \emptyset.$ Then \\
(i) $ M_T= \{\pm x_0\},$ for some $x_0\in S_\mathbb{X}$.\\
(ii) $ \sup\{\|Tx\| : x \in  H_\alpha \cap  S_\mathbb{X} \} < \|T\|$ for all $\alpha \in \Lambda$, where $\{H_\alpha: \alpha \in \Lambda\}$ is the collection of all hyperspaces such that $d(x_0, H_\alpha)> 0$.

\end{theorem}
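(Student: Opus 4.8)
The plan is to exploit the description of smoothness via supporting functionals recorded in the introduction: $T$ is a smooth point of $\mathbb{B}(\mathbb{X},\mathbb{Y})$ precisely when there is a unique supporting hyperplane to the unit ball at $T$, equivalently a unique $\phi \in S_{\mathbb{B}(\mathbb{X},\mathbb{Y})^{*}}$ with $\phi(T) = \|T\|$. Accordingly, I would prove both (i) and (ii) by producing two distinct such functionals whenever the conclusion fails. The basic device is the observation that, for every $u \in M_T$ and every $g \in S_{\mathbb{Y}^{*}}$ with $g(Tu) = \|T\|$, the functional $\Phi_{u,g}$ on $\mathbb{B}(\mathbb{X},\mathbb{Y})$ defined by $\Phi_{u,g}(S) = g(Su)$ has $\|\Phi_{u,g}\| \le \|u\| = 1$ and $\Phi_{u,g}(T) = g(Tu) = \|T\|$, hence is a supporting functional for $T$; moreover $\Phi_{u,g}$ acts on a rank-one operator $f \otimes w$ (the map $x \mapsto f(x)w$) by the handy formula $\Phi_{u,g}(f \otimes w) = f(u)\,g(w)$.

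For (i), I would first note that $M_T = -M_T$, so if $M_T \ne \{\pm x_0\}$ there is $x_1 \in M_T$ with $x_1 \ne \pm x_0$, whence $x_0, x_1$ are linearly independent. Using Hahn--Banach on $\operatorname{span}\{x_0,x_1\}$ followed by a norm-preserving extension, choose $f \in \mathbb{X}^{*}$ with $f(x_0) = 1$, $f(x_1) = 0$; choose supporting functionals $g_0, g_1 \in S_{\mathbb{Y}^{*}}$ for $Tx_0$, $Tx_1$; and choose $w \in \mathbb{Y}$ with $g_0(w) \ne 0$. Then $\Phi_{x_0,g_0}$ and $\Phi_{x_1,g_1}$ are supporting functionals for $T$ that differ on $S = f \otimes w$, because $\Phi_{x_0,g_0}(S) = g_0(w) \ne 0 = f(x_1)g_1(w) = \Phi_{x_1,g_1}(S)$. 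This contradicts smoothness, so $M_T = \{\pm x_0\}$.

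For (ii), normalise $\|T\| = 1$, fix a supporting functional $g_0$ of $Tx_0$, and put $\phi_0 = \Phi_{x_0,g_0}$. Note that a hyperspace $H$ with $d(x_0,H) > 0$ is exactly a closed hyperspace not containing $x_0$. Suppose, towards a contradiction, that $\sup\{\|Tx\| : x \in H \cap S_{\mathbb{X}}\} = 1$ for some such $H$, and pick $z_n \in H \cap S_{\mathbb{X}}$ with $\|Tz_n\| \to 1$ and $g_n \in S_{\mathbb{Y}^{*}}$ with $g_n(Tz_n) = \|Tz_n\|$. The functionals $\psi_n = \Phi_{z_n,g_n}$ lie in the closed unit ball of $\mathbb{B}(\mathbb{X},\mathbb{Y})^{*}$, which is weak$^{*}$-compact by Banach--Alaoglu, so $\{\psi_n\}$ admits a weak$^{*}$ cluster point $\psi$ with $\|\psi\| \le 1$; since $\psi_n(T) = \|Tz_n\| \to 1$, we get $\psi(T) = 1$, so $\psi$ is a supporting functional for $T$. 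Now pick $f \in \mathbb{X}^{*}$ with $\ker f = H$ and $f(x_0) = 1$ (possible because $\mathbb{X}/H$ is one-dimensional and $x_0 \notin H$) and $w \in \mathbb{Y}$ with $g_0(w) \ne 0$, and set $S = f \otimes w$. Since $f$ vanishes on $H$, $\psi_n(S) = f(z_n)g_n(w) = 0$ for all $n$, so $\psi(S) = 0$; but $\phi_0(S) = f(x_0)g_0(w) = g_0(w) \ne 0$. Hence $\psi \ne \phi_0$ are two distinct supporting functionals of $T$, contradicting smoothness; therefore $\sup\{\|Tx\| : x \in H \cap S_{\mathbb{X}}\} < \|T\|$ for every such $H$.

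The Hahn--Banach constructions and the evaluation formula $\Phi_{u,g}(f \otimes w) = f(u)g(w)$ are routine. The step I expect to be the main obstacle is the weak$^{*}$ cluster-point argument in (ii): since the weak$^{*}$ topology on $\mathbb{B}(\mathbb{X},\mathbb{Y})^{*}$ need not be metrizable, one cannot extract a weak$^{*}$-convergent subsequence of $\{\psi_n\}$ and must instead argue with a cluster point of the net $\{\psi_n\}$, checking evaluation by evaluation that $\psi(T) = \lim_n \psi_n(T) = 1$ and $\psi(S) = \lim_n \psi_n(S) = 0$ pass to the cluster value.
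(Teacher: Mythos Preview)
Your argument is correct, but it follows a genuinely different route from the paper's.  The paper works entirely with James' right--additivity criterion for smoothness rather than with supporting functionals.  For (i) it simply cites an earlier result (Paul--Sain--Ghosh).  For (ii) the paper's proof is completely elementary: given a closed hyperspace $H$ with $x_0\notin H$ and $\sup_{x\in H\cap S_{\mathbb X}}\|Tx\|=\|T\|$, it uses the algebraic splitting $\mathbb X=\mathbb R x_0\oplus H$ to define $A_1(ax_0+h)=aTx_0$ and $A_2(ax_0+h)=Th$, checks that $T\perp_B A_1$ (via the norming sequence in $H$) and $T\perp_B A_2$ (via $A_2x_0=0$), and observes that $A_1+A_2=T$, so $T\not\perp_B(A_1+A_2)$, contradicting right additivity.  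No weak$^*$ compactness, no cluster points, no dual of $\mathbb B(\mathbb X,\mathbb Y)$.

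What each approach buys: the paper's construction is shorter and entirely finitary---it sidesteps the non-metrizability issue you flagged by never leaving $\mathbb B(\mathbb X,\mathbb Y)$ itself.  Your approach, on the other hand, gives a self-contained proof of (i) rather than an external citation, and the evaluation-functional machinery $\Phi_{u,g}(S)=g(Su)$ together with Banach--Alaoglu is the more robust template if one later wants to analyse the full set of supporting functionals of $T$ (e.g.\ for Gateaux vs.\ Fr\'echet smoothness).  Your worry about the cluster point is well placed but harmless: since $\psi_n(T)$ and $\psi_n(S)$ are \emph{convergent} scalar sequences, any weak$^*$ cluster point $\psi$ of $\{\psi_n\}$ automatically satisfies $\psi(T)=\lim\psi_n(T)$ and $\psi(S)=\lim\psi_n(S)$, because $\psi(A)$ is a cluster value of $\{\psi_n(A)\}$ for every fixed $A$.
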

\begin{proof} The condition (i)  follows from  Theorem 4.5 of Paul et al.\cite{PSG}. We need only to show (ii). 
If possible, suppose that $\sup \{\|Tx\| : x \in H_\alpha \cap S_\mathbb{X} \} = \|T\|$ for some $\alpha \in \Lambda.$ Then there exists a sequence $\{x_n\}\in H_\alpha \cap S_\mathbb{X}$ such that $\|Tx_n\|\rightarrow \|T\|$. Then each $z\in \mathbb{X}$ can be written as\\
$$z= a x_0+ h,$$ where $a \in \mathbb{R}$ and $h \in H_\alpha$. \\
Assume that $d(x_0,H_\alpha)=r>0$. So $ \inf \{\|x_0+h\|: h\in H_\alpha\} =  r$. Therefore for any $a \neq 0$, $\|x_0+ \frac{1}{a}h\|\geq r \Rightarrow \|z\|=\|a x_0+ h\|\geq r|a| \Rightarrow |a|\leq \frac{\|z\|}{r}$. Define linear operators $A_1, A_2: \mathbb{X}\rightarrow \mathbb{Y}$ as follows:
\[A_1 (z)= a Tx_0,~A_2 z= Th\]
It is easy to verify that $A_1,~ A_2$ are bounded linear operators. Now, for all $\lambda \in \mathbb{R}$, $\|T+ \lambda A_1\|\geq \|(T+\lambda A_1)x_n\|=\|Tx_n\|\rightarrow\|T\|$. Thus $T\bot_B A_1$. Similarly, for all $\lambda \in \mathbb{R}$, $\|T+ \lambda A_2\|\geq \|(T+\lambda A_2)x_0\|= \|Tx_0+ \lambda A_2 x_0\|=\|Tx_0\| = \|T\|$. Hence $T \bot_B A_2$. But $T=A_1+A_2,$ which shows that $T \not \perp_B (A_1+A_2)$. This shows that $T$ is not right additive and so from Theorem 5.1 of James \cite{J}, it follows that $T$ is not smooth.
\end{proof}
We would like to conclude the present paper with the following remark.
\begin{remark}
Although the necessary and sufficient conditions for the smoothness of a bounded linear operator mentioned in Theorems \ref{theorem:smooth nec} and \ref{theorem:smooth suff} are different, they are strikingly similar. Also the characterization of the smoothness of a bounded linear operator is still open when the norm attaining set is empty.
\end{remark}

\end{document}